\title{\vspace*{18pt}  Distributed Controllers for Multi-Terminal HVDC~Transmission~Systems}
\author{ \IEEEauthorblockA{Martin Andreasson$^{\IEEEauthorrefmark{1}}$, Dimos V. Dimarogonas, Henrik Sandberg and  Karl H. Johansson  
}
\\ 
\IEEEauthorblockA{ACCESS Linnaeus Centre, KTH Royal Institute of Technology, Stockholm, Sweden. }
\thanks{This work was supported in part by the European Commission by  the Swedish Research Council and the Knut and Alice Wallenberg Foundation. 
The $2^{\text{nd}}$ author is also affiliated with the Centre for Autonomous Systems at KTH. This work extends the manuscripts presented in \cite{Andreasson2014_IFAC, andreasson2014CDC}. The Authors are with the ACCESS Linnaeus Centre, KTH Royal Institute of Technology, 11428 Stockholm, Sweden.
\IEEEauthorrefmark{1} Corresponding author. E-mail: mandreas@kth.se}
}
\newtheorem{theorem}{Theorem}
\newtheorem{corollary}[theorem]{Corollary}
\newtheorem{lemma}[theorem]{Lemma}
\newtheorem{remark}{Remark}
\newtheorem{objective}{Objective}
\DeclareMathOperator*{\argmin}{argmin}
\DeclareMathOperator*{\diag}{diag}
\DeclareMathOperator*{\sgn}{sgn}
\newcommand{\beq}{\begin{equation}}
\newcommand{\eeq}{\end{equation}}
\newcommand{\bq}{\begin{eqnarray}}
\newcommand{\eq}{\end{eqnarray}}
\newcommand{\bqn}{\begin{eqnarray*}}
\newcommand{\eqn}{\end{eqnarray*}}
\newcommand{\bee}{\begin{enumerate}}
\newcommand{\eee}{\end{enumerate}}
\newlength\fheight
\newlength\fwidth
\begin{document}
\maketitle

\begingroup
\makeatletter
\renewcommand{\p@subfigure}{}
\makeatother

\begin{abstract}
High-voltage direct current (HVDC) is an increasingly commonly used technology for long-distance electric power transmission, mainly due to its low resistive losses. 
In this paper the voltage-droop method (VDM) is reviewed, and three novel distributed controllers for multi-terminal HVDC (MTDC) transmission systems are proposed. Sufficient conditions for when the proposed controllers render the equilibrium of the closed-loop system asymptotically stable are provided. These conditions give insight into suitable controller architecture, e.g., that the communication graph should be identical with the graph of the MTDC system, including edge weights.
Provided that the equilibria of the closed-loop systems are asymptotically stable, it is shown that the voltages asymptotically converge to within predefined bounds. Furthermore, a quadratic cost of the injected currents is asymptotically minimized. The proposed controllers are evaluated on a four-bus MTDC system. 
\end{abstract}

\section{Introduction}
The transmission of power over long distances is one of the greatest challenges in today's power transmission systems. Since resistive losses increase with the length of power transmission lines, higher voltages have become abundant in long-distance power transmission. One example of long-distance power transmission are large-scale off-shore wind farms, which often require power to be transmitted in cables over long distances to the mainland AC power grid. High-voltage direct current (HVDC) power transmission is a commonly used technology for long-distance power transmission. Its higher investment costs compared to AC transmission lines, mainly due to expensive AC-DC converters, are compensated by its lower resistive losses for sufficiently long distances \cite{Bresesti2007}. The break-even point, i.e., the point where the total costs of overhead HVDC and AC lines are equal, is typically 500-800 km \cite{padiyar1990hvdc}. However, for cables, the break-even point is typically lower than 100 km, due to the AC current needed to charge the capacitors of the cable insulation \cite{bresesti2007hvdc}. Increased use of HVDC for electrical power transmission suggests that future HVDC transmission systems are likely to consist of multiple terminals connected by several HVDC transmission lines. Such systems are referred to as multi-terminal HVDC (MTDC) systems in the literature \cite{van2010multi}. 

Maintaining an adequate DC voltage is an important  control problem for HVDC transmission systems. Firstly, the voltage levels at the DC terminals govern the current flows by Ohm's law and Kirchhoff's circuit laws. Secondly, if the DC voltage deviates too far from a nominal operational voltage, equipment may be damaged, resulting in loss of power transmission capability \cite{van2010multi}.
For existing point-to-point HVDC connections consisting of only two buses, the voltage is typically controlled at one of the buses, while the injected current is controlled at the other bus \cite{kundur1994power}. 
As this decentralized controller structure has no natural extension to the case with three or more buses, various methods have been proposed for controlling MTDC systems. The voltage margin method, VMM, is an extension of the controller structure of point-to-point connections. For an $n$-bus MTDC system, $n-1$ buses are assigned to control the injected current levels around a setpoint, while the remaining bus controls the voltage around a given setpoint. VMM typically controls the voltage fast and accurately. The major disadvantage is the undesirable operation points, which can arise when one bus alone has to change its current injections to maintain a constant voltage level. While this can be addressed by assigning more than one bus to control the voltage, it often leads to undesirable switching of the injected currents \cite{Nakajima1999}. 
The voltage droop method, VDM, on the other hand is symmetric, in the sense that all local decentralized controllers have the same structure. Each bus injects current in proportion to the local deviation of the bus voltage from its nominal value. Similar to VMM, VDM is a simple decentralized controller not relying on any communication \cite{haileselassie2009control, johnson1993expandable}. As we will formally show in this paper, a major disadvantage of VDM is
 static errors of the voltage, as well as possibly suboptimal operation points.

The highlighted drawbacks of existing decentralized MTDC controllers give rise to the question if performance can be increased by allowing for communication between buses. 
Distributed controllers have been successfully applied to both primary, secondary, and to some extent also tertiary frequency control of AC transmission systems \cite{andreasson2013distributed, simpson2012synchronization, li2014connecting, Andreasson2014TAC, mallada2014optimal}. Although the dynamics of HVDC grids can be modelled with a lower order model than AC grids, controlling DC grids may prove more challenging. This is especially true for decentralized and distributed controller structures. The challenges consist of the faster time-scales of MTDC systems, as well as the lack of a globally measurable variable corresponding to the AC frequency. 
In \cite{dai2010impact, dai2011voltage, silva2012provision, andreasson2014mtdacac}, decentralized controllers are employed to share primary frequency control reserves of AC systems connected through an MTDC system. Due to the lack of a communication network, the controllers induce static control errors. 
In \cite{Sarlette20123128}, a distributed approach is taken in contrast to the previous references, allowing for communication between DC buses and thus improving the performance of the controller. In \cite{Anand2013, Xiaonan2014}, distributed voltage controllers for DC microgrids achieving current sharing are proposed. The controllers however relies on a complete communication network. In \cite{Nasirian2014}, a distributed controller for DC microgrids with an arbitrary, connected communication network is proposed. Stability of the closed-loop system is however not guaranteed.  
 
In this paper three novel distributed controllers for MTDC transmission systems are proposed, all allowing for certain limited communication between buses. It is shown that under certain conditions, the proposed controllers render the equilibrium of the closed-loop system asymptotically stable. Additionally the voltages converge close to their nominal values, while a quadratic cost function of the current injections is asymptotically minimized. The sufficient stability criteria derived in this paper give insights into suitable controller architecture, as well as insight into the controller design. All proposed controllers are evaluated by simulation on a four-bus MTDC system.

The remainder of this paper is organized as follows. In Section~\ref{sec:prel}, the mathematical notation is defined. In Section \ref{sec:model}, the system model and the control objectives are defined. 
In Section~\ref{sec:general_results}, some generic properties of MTDC systems are derived.  
In Section~\ref{subsec:model_droop}, voltage droop control is analyzed. 
 In Section~\ref{sec:dist_control}, three different distributed averaging controllers are presented, and their stability and steady-state properties are analyzed. In Section~\ref{sec:simulations}, simulations of the distributed controllers on a four-terminal MTDC system are provided, showing the effectiveness of the proposed controllers. The paper ends with a concluding discussion in Section~\ref{sec:discussion}.

\section{Notation}
\label{sec:prel}
Let $\mathcal{G}$ be a graph. Denote by $\mathcal{V}=\{ 1,\hdots, n \}$ the vertex set of $\mathcal{G}$, and by $\mathcal{E}=\{ 1,\hdots, m \}$ the edge set of $\mathcal{G}$. Let $\mathcal{N}_i$ be the set of neighboring vertices to $i \in \mathcal{V}$.
In this paper we will only consider static, undirected and connected graphs. For the application of control of MTDC power transmission systems, this is a reasonable assumption as long as there are no power line failures.
Denote by $\mathcal{B}$ the vertex-edge incidence matrix of a graph, and let $\mathcal{\mathcal{L}_W}=\mathcal{B}W\mathcal{B}^T$ be its weighted Laplacian matrix, with edge-weights given by the  elements of the positive definite diagonal matrix $W$.
Let $\mathbb{C}^-$ denote the open left half complex plane, and $\bar{\mathbb{C}}^-$ its closure. We denote by $c_{n\times m}$ a matrix of dimension $n\times m$ whose elements are all equal to $c$, and by $c_n$ a column vector whose elements are all equal to $c$. For a symmetric matrix $A$, $A>0 \;(A\ge 0)$ is used to denote that $A$ is positive (semi) definite. $I_{n}$ denotes the identity matrix of dimension $n$. For vectors $x$ and $y$, we denote by $x\le y$ that the inequality holds for all elements. 
We will often drop the notion of time dependence of variables, i.e., $x(t)$ will be denoted $x$ for simplicity.

\section{Model and problem setup}
\label{sec:model}
Consider an MTDC transmission system consisting of $n$ HVDC terminals, henceforth referred to as buses. The buses are denoted by the vertex set $ \mathcal{V}= \{1, \dots, n\}$, see Figure~\ref{fig:graph} for an example of an MTDC topology. The DC buses are modelled as ideal current sources which are connected by $m$ HVDC transmission lines, denoted by the edge set $ \mathcal{E}= \{1, \dots, m\}$. The dynamics of any system (e.g., an AC transmission system) connected through the DC buses are neglected, as are the dynamics of the DC buses (e.g., AC-DC converters). The HVDC lines are assumed to be purely resistive, neglecting capacitive and inductive elements of the HVDC lines. The assumption of purely resistive lines is not restrictive for the control applications considered in this paper  \cite{kundur1994power}. This implies that 
\begin{align*}
I_{ij} = \frac{1}{R_{ij}} (V_i -V_j),
\end{align*}
due to Ohm's law, where $V_i$ is the voltage of bus $i$, $R_{ij}$ is the resistance and $I_{ij}$ is the current of the HVDC line from bus $i$ to $j$. 
The voltage dynamics of an arbitrary DC bus $i$ are thus given by
\begin{align}
C_i \dot{V}_i &= -\sum_{j\in \mathcal{N}_i} I_{ij} + I_i^{\text{inj}} + u_i \nonumber \\
&= -\sum_{j\in \mathcal{N}_i} \frac{1}{R_{ij}}(V_i -V_j) + I_i^{\text{inj}} + u_i,
\label{eq:hvdc_scalar}
\end{align}
where $C_i$ is the total capacitance of bus $i$, including shunt capacitances and the capacitance of the HVDC line, $I_i^{\text{inj}}$ is the injected current due to loads, which is assumed to be unknown and constant (assuming step disturbances), and $u_i$ is the controlled injected current. Note that we impose no dynamics nor constraints on the controlled injected current $u_i$. In practice, this requires that each MTDC bus is connected with a strong AC grid which can supply sufficient power to the MTDC grid. 
Equation \eqref{eq:hvdc_scalar} may be written in vector-form as
\begin{align}
\begin{aligned}
C\dot{V} &= -\mathcal{L}_R V +I^{\text{inj}} + u,
\end{aligned}
\label{eq:hvdc_vector}
\end{align}
where $V=[V_1, \dots, V_n]^T$, $C=\diag([C_1, \dots, C_n])$, $I^{\text{inj}} = [I^{\text{inj}}_1, \dots, I^{\text{inj}}_n]^T$, $u=[u_1, \dots, u_n]^T$ and $\mathcal{L}_R$ is the weighted Laplacian matrix of the graph representing the transmission lines, whose edge-weights are given by the conductances $\frac{1}{R_{ij}}$. For convenience, we also introduce the matrix of elastances $E=\diag([C_1^{-1}, \dots, C_n^{-1}])$.  The control objective considered in this paper is defined below.

\begin{figure}
\center
\begin{tikzpicture}[american voltages]
\draw 
(0,2) node[ground, rotate=180] {}
to 	[C, l=$C_1$] (0,0.5)
to 	[short, -*]  (0,0)
to  [R, l_=$R_{12}$, i_=$I_{12}$] (3,0)
to  [short, -*] (4,0)
to	(4,0.5) [C, l=$C_2$]
to	(4,2) node[ground, rotate=180] {}
(0.4,0.42) node[]  {$V_1$}
(3.6,0.42) node[]  {$V_2$}
[short, l={}]  (0,0)
to  [R, l=$R_{13}$, i=$I_{13}$] (0,-2.5)
to  [short, -*] (0,-3)
[short, l={}]  (4,0)
to  [R, l_=$R_{24}$, i_=$I_{24}$] (4,-2.5)
to  [short, -*] (4,-3)
(0,-5) node[ground] {}
to 	[C, l_=$C_3$] (0,-3.5)
to 	[short, l={}]  (0,-3)
to  [R, l=$R_{34}$, i=$I_{34}$] (3,-3)
to  [short, l={}] (4,-3)
to	(4,-3.5) [C, l_=$C_4$]
to	(4,-5) node[ground] {}
(0.4,-3.42) node[]  {$V_3$}
(3.6,-3.42) node[]  {$V_4$}
(-2,0) node[ground, rotate=180] {}
to [american current source, l^=$I_1^\text{inj}+u_1$] (0,0)
to [short, l={}] (0,0)
(6,0) node[ground, rotate=180] {}
to [american current source, l_=$I_2^\text{inj}+u_2$] (4,0)
to [short, l={}] (4,0)
(-2,-3) node[ground] {}
to [american current source, l_=$I_3^\text{inj}+u_3$] (0,-3)
to [short, l={}] (0,-3)
(6,-3) node[ground] {}
to [american current source, l^=$I_4^\text{inj}+u_4$] (4,-3)
to [short, l={}] (4,-3);
\end{tikzpicture}
\caption{Topology of a four bus MTDC system.}
\label{fig:graph}
\end{figure}
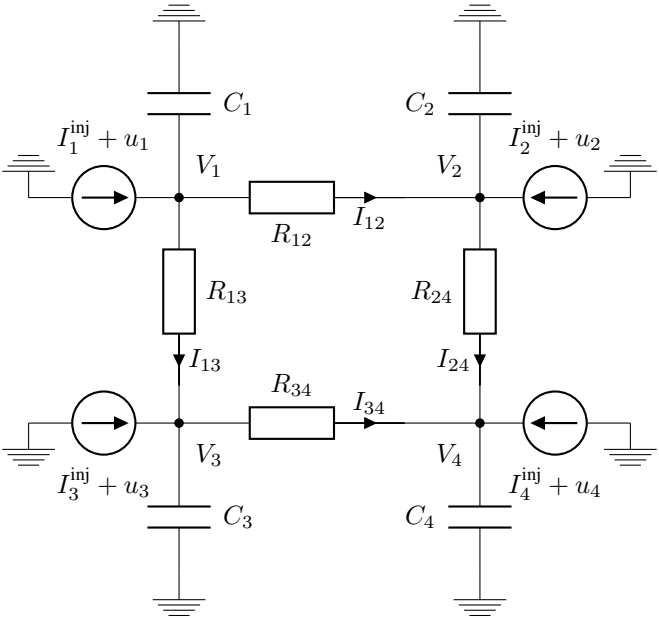

\begin{objective}
\label{obj:1}
The cost of the current injections should be minimized asymptotically. More precisely
\begin{align}
\lim_{t\rightarrow \infty} u(t) = u^*,
\label{eq:opt_u}
\end{align}
where $u^*$ minimizes the cost of current injections while ensuring a balanced network, and is defined by
\begin{align}
[u^*,V^*]=\argmin_{[u,V]} \sum_{i \in \mathcal{V}} \frac{1}{2} f_i u_i^2 \quad \text{s.t.} \quad \mathcal{L}_R V  &= I^{\text{inj}} + u, \label{eq:opt1}
\end{align}
and where $f_i>0, i=1, \dots , n$, are any positive constants. Subsequently, the following quadratic cost function of the voltage deviations should be minimized over the set $V^*$:
\begin{align}
\label{eq:cost_voltage}
\min_{V\in V^*} \sum_{i \in \mathcal{V}} \frac 12 g_i(V^\text{nom}_i -V_i)^2,
\end{align}
for some $g_i \ge 0 \; \forall i=1, \dots, n$, 
and where $V^{\text{nom}}_i$ is the nominal voltage of bus $i$ and $V^*$ is obtained by solving \eqref{eq:opt1}. 
\end{objective}

\begin{remark}
Equations \eqref{eq:opt_u}--\eqref{eq:opt1} imply that  the asymptotic voltage differences between the DC buses are bounded, i.e., $\lim_{t\rightarrow \infty} |V_i(t)-V_j(t)|\le \Delta V \; \forall i,j \in \mathcal{V}$, for some $\Delta V>0$. 
This implies that it is in general not possible to have $\lim_{t\rightarrow \infty}V_i(t) = V^{\text{nom}}_i$ for all $i \in \mathcal{V}$, e.g., by PI-control. 
 We show in Lemma~\ref{lemma:voltage_differences} that $\Delta V$ can be bounded by a function of the injected and controlled injected currents, $I^\text{inj} +u$, as well as the Laplacian matrix of the MTDC system. 
\end{remark}

\begin{remark}
\label{cor:optimality_general}
The optimal solution $V^*$ of \eqref{eq:opt_u}--\eqref{eq:opt1} is unique only up to an additive constant vector $c_n$, where all elements are equal. Minimizing \eqref{eq:cost_voltage} determines this constant vector, which can be seen as the average voltage in the MTDC grid. 
\end{remark}

\begin{remark}
Equations~\eqref{eq:opt_u}--\eqref{eq:opt1} are analogous to the quadratic optimization of AC power generation costs, c.f., \cite{andreasson2013distributed, dorfler2014breaking}. The quadratic cost function of the voltages \eqref{eq:cost_voltage} has no analogy in the corresponding secondary AC frequency control problem. This since the voltages in an MTDC grid do not synchronize in general, as opposed to the frequencies in an AC grid. 
\end{remark}

\begin{remark}
The quadratic cost of voltage deviations \eqref{eq:cost_voltage} replaces the common notion of acceptable voltage range. 
\end{remark}

\section{General properties of MTDC systems}
\label{sec:general_results}  
Before exploring different control strategies for MTDC systems, we derive some general results on properties of controlled MTDC systems which will be useful for the remainder of this paper. Our first result gives a generic upper bound on the asymptotic relative voltage differences of an MTDC system, regardless of the controller structure. 

 \begin{lemma}
 \label{lemma:voltage_differences}
Consider any stationary control signal $u$. The relative voltage differences satisfy 
\begin{align*}
|V_i-V_j| \le 2 I^{\text{max}} \sum_{i=2}^n \frac{1}{\lambda_i},
\end{align*}
where $I^{\text{max}}=\max_i |I_i^{\text{tot}}|$ and $I^{\text{tot}}_i=  I_i^{\text{inj}} +u_i$ and $\lambda_i$ denotes the $i$'th eigenvalue of $\mathcal{L}_R$. 
\end{lemma}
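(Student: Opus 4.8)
The plan is to reduce the statement to a spectral estimate for the pseudoinverse of the graph Laplacian. First I would exploit that a \emph{stationary} control signal $u$ forces an equilibrium of \eqref{eq:hvdc_vector}: setting $C\dot V=0$ yields $\mathcal{L}_R V = I^{\text{inj}}+u =: I^{\text{tot}}$, with $I^{\text{tot}}_i=I_i^{\text{inj}}+u_i$. Since $\mathcal{L}_R$ is the Laplacian of a connected graph it is positive semidefinite and singular, with one-dimensional kernel spanned by the all-ones vector $\bind$. Hence a stationary $V$ exists only if $\bind^TI^{\text{tot}}=0$ (current balance), and then $V$ is determined only up to an additive multiple of $\bind$. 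The key observation is that the quantities of interest, $V_i-V_j=(e_i-e_j)^TV$ with $e_i$ the $i$th standard basis vector, are invariant under $V\mapsto V+c\bind$, so without loss of generality I may take the minimum-norm representative $V=\mathcal{L}_R^{\dagger}I^{\text{tot}}$.

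Next I would diagonalize. Writing the ordered eigenvalues as $0=\lambda_1<\lambda_2\le\dots\le\lambda_n$ with orthonormal eigenvectors $v_1=\bind/\sqrt n,\,v_2,\dots,v_n$, the pseudoinverse admits the spectral form $\mathcal{L}_R^{\dagger}=\sum_{k=2}^n\lambda_k^{-1}v_kv_k^T$. Substituting this into $V=\mathcal{L}_R^{\dagger}I^{\text{tot}}$ and projecting onto $e_i-e_j$ gives the representation $V_i-V_j=\sum_{k=2}^n\lambda_k^{-1}\bigl((v_k)_i-(v_k)_j\bigr)\,v_k^TI^{\text{tot}}$, in which the undetermined component along $\bind$ has dropped out automatically because $(e_i-e_j)^T\bind=0$.

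Finally I would bound the sum termwise. Each eigenvector has unit Euclidean norm, so every component obeys $|(v_k)_i|\le 1$ and therefore $|(v_k)_i-(v_k)_j|\le 2$, while the projection $v_k^TI^{\text{tot}}$ of the balanced current vector onto $v_k$ is controlled by $I^{\text{max}}=\max_i|I_i^{\text{tot}}|$. Combining the triangle inequality with these two estimates collapses the spectral sum to $|V_i-V_j|\le 2I^{\text{max}}\sum_{k=2}^n\lambda_k^{-1}$, which is exactly the asserted bound, with the factor $\sum_{k=2}^n\lambda_k^{-1}=\operatorname{tr}(\mathcal{L}_R^{\dagger})$ appearing naturally from the eigenvalues.

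The step I expect to be the main obstacle is the final bookkeeping: one must control the projection $v_k^TI^{\text{tot}}$ uniformly in $k$ while keeping the constant clean, since a naive Cauchy--Schwarz bound produces $\|I^{\text{tot}}\|_2$ rather than $I^{\text{max}}$ and one has to argue carefully that the resulting estimate genuinely matches the stated form. The singularity of $\mathcal{L}_R$ is what makes the pseudoinverse together with the shift-invariance of voltage differences indispensable; once the spectral representation is in place, the remaining inequalities are routine norm bounds.
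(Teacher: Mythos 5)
Your proposal is essentially the paper's own proof: the paper likewise passes to the equilibrium $\mathcal{L}_R V = I^{\text{tot}}$, expands $V$ in the orthonormal eigenbasis of $\mathcal{L}_R$, solves $a_k = w_k^T I^{\text{tot}}/\lambda_k$ for $k\ge 2$, observes that the undetermined component along $1_n$ cancels in $V_i-V_j$, and then bounds the remaining sum termwise; your pseudoinverse formulation and your direct bound $|(v_k)_i-(v_k)_j|\le 2$ are only cosmetic variants of the paper's chain $|V_i-V_j|\le 2\|\Delta V\|_\infty\le 2\|\Delta V\|_2\le 2\sum_{k\ge 2}|a_k|$.

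The one step you honestly flag as the obstacle --- establishing $|v_k^T I^{\text{tot}}|\le I^{\text{max}}$ rather than the $\|I^{\text{tot}}\|_2$ that Cauchy--Schwarz gives --- is left unresolved in your proposal, and you should be aware that the paper does not resolve it either: it simply asserts $|w_k^T I^{\text{tot}}/\lambda_k|\le I^{\text{max}}/\lambda_k$ ``using the fact that $\|w_k\|_2=1$,'' which only yields $\|I^{\text{tot}}\|_2$, not $\max_i|I_i^{\text{tot}}|$. In general $|w_k^T I^{\text{tot}}|\le \|w_k\|_1\, I^{\text{max}}\le \sqrt{n}\, I^{\text{max}}$, and the factor $\sqrt{n}$ cannot be dropped (already for $n=2$, $w_2=\tfrac{1}{\sqrt 2}(1,-1)^T$ and $I^{\text{tot}}=(a,-a)^T$ give $|w_2^TI^{\text{tot}}|=\sqrt 2\,|a|$). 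So your instinct that this is where the difficulty lies is correct; completing the argument cleanly requires either accepting a dimension-dependent constant or a different final estimate, and on this point your proposal is no weaker than the published proof.
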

\begin{proof}
Consider the equilibrium of  \eqref{eq:hvdc_vector}:
\begin{align}
\label{eq:equilibrium_I_tot}
\mathcal{L}_R V &=  I^{\text{inj}} + u \triangleq I^{\text{tot}}.
\end{align}
Let
$
V=\sum_{i=1}^n a_i w_i,
$
where $w_i$ is the $i$'th eigenvector of $\mathcal{L}_R$ with the corresponding eigenvalue $\lambda_i$. Since $\mathcal{L}_R$ is symmetric, the eigenvectors $\{w_i\}_{i=1}^n$ can be chosen so that they form an orthonormal basis of $\mathbb{R}^n$. Using the eigendecomposition of $V$ above, we obtain the following equation from \eqref{eq:equilibrium_I_tot}:
\begin{align}
\label{eq:equilibrium_I_tot_eigen}
\mathcal{L}_R V &= \mathcal{L}_R \sum_{i=1}^n a_i w_i =  \sum_{i=1}^n a_i \lambda_i w_i = I^{\text{tot}}.
\end{align}
By premultiplying \eqref{eq:equilibrium_I_tot_eigen} with $w_k$ for $k=1, \dots , n$, we obtain:
\begin{align*}
a_k\lambda_k = w_k^T I^{\text{tot}},
\end{align*}
due to orthonormality of $\{w_i\}_{i=1}^n$. Hence, for $k=2, \dots, n$ we get
\begin{align*}
a_k = \frac{w_k^T I^{\text{tot}}}{\lambda_k}.
\end{align*}
The constant $a_1$ is not determined by \eqref{eq:equilibrium_I_tot_eigen}, since $\lambda_1=0$. Denote $\Delta V=\sum_{i=2}^n a_i w_i$. Since $w_1=\frac{1}{\sqrt{n}}1_{n}$, $V_i-V_j=\Delta V_i-\Delta V_j$ for any $i,j \in \mathcal{V}$. Thus, the following bound is easily obtained:
\begin{align*}
&|V_i-V_j| = |\Delta V_i-\Delta V_j| \le 2 \max_i |\Delta V_i| = 2 \norm{\Delta V}_\infty \\
 &\le 2 \norm{\Delta V}_2 = 2 \norm{\sum_{i=2}^n a_i w_i}_2 \le 2 \sum_{i=2}^n |a_i|= 2 \sum_{i=2}^n \left|\frac{w_i^T I^{\text{tot}}}{\lambda_i}\right| \\
 & \le 2 I^{\text{max}} \sum_{i=2}^n \frac{1}{\lambda_i},
\end{align*}
where we have used the fact that $\norm{w_i}_2=1$ for all $i=1, \dots, n$, and $\norm{x}_\infty \le \norm{x}_2$ for any $x\in \mathbb{R}^n$.
\end{proof}

Our second result reveals an interesting general structure of asymptotically optimal MTDC control signals.

 \begin{lemma}
 \label{lemma:optimality}
Equations~\eqref{eq:opt_u}--\eqref{eq:opt1} in Objective \ref{obj:1} are satisfied if and only if $\lim_{t\rightarrow \infty} u(t) =\mu F^{-1}1_{n}$ and $\lim_{t\rightarrow \infty} \mathcal{L}_R V(t)  = I^{\text{inj}} + \mu F^{-1}1_{n}$, where $F = \diag([f_1, \dots, f_n])$. The scaling factor is given by $\mu = -(\sum_{i=1}^n I_i^{\text{inj}})/(\sum_{i=1}^n f_i^{-1})$. 
 \end{lemma}
 
 \begin{proof}
The KKT condition for the optimization problem \eqref{eq:opt1} is $Fu=\mu 1_{n}$, which gives $u= F^{-1}\mu 1_{n}$. Substituting this expression for $u$ and pre-multiplying the constraint $\lim_{t\rightarrow \infty} \mathcal{L}_R V(t)  = I^{\text{inj}} + F^{-1} \mu 1_{n}$ with $1_n^T$, yields the desired expression for $\mu$. 
Since \eqref{eq:opt1} is convex, the KKT condition is a necessary and sufficient condition for optimality.
 \end{proof}
 
 \begin{lemma}
\label{cor:optimality_voltage}
Equation~\eqref{eq:cost_voltage}  in Objective \ref{obj:1} is minimized if and only if $\sum_{i =1}^n g_i( V_i  - V^\text{nom}_i ) = 0$. 
\end{lemma}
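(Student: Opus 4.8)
The plan is to reduce this scalar‑weighted voltage minimization to a one‑dimensional problem by exploiting the structure of the optimal set $V^*$ identified in Remark~\ref{cor:optimality_general}. Since the constraint in \eqref{eq:opt1} involves only $\mathcal{L}_R V$, and $\mathcal{L}_R 1_n = 0$, the optimizer $V^*$ is pinned down by \eqref{eq:opt1} only up to an additive multiple of $1_n$. Hence $V^*$ is a one‑parameter affine set, and every element can be written as $V = \bar V + c\,1_n$ for a fixed representative $\bar V \in V^*$ and an arbitrary scalar $c \in \mathbb{R}$. Minimizing \eqref{eq:cost_voltage} over $V \in V^*$ therefore amounts to minimizing over the single real parameter $c$.

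First I would substitute this parametrization into the cost \eqref{eq:cost_voltage}, obtaining the scalar function
\[
\phi(c) = \sum_{i\in\mathcal V} \frac{1}{2}\, g_i\big(V^\text{nom}_i - \bar V_i - c\big)^2,
\]
which, being a nonnegative combination of squares with $g_i \ge 0$, is convex in $c$. Next I would compute the stationarity condition $\phi'(c) = 0$, namely
\[
-\sum_{i\in\mathcal V} g_i\big(V^\text{nom}_i - \bar V_i - c\big) = 0,
\]
and observe that, writing back $V_i = \bar V_i + c$, this is exactly $\sum_{i=1}^n g_i(V_i - V^\text{nom}_i) = 0$, the claimed condition.

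Finally, since $\phi$ is convex, this first‑order condition is both necessary and sufficient for a global minimizer over $c$, and hence for a minimizer of \eqref{eq:cost_voltage} over $V^*$, which establishes the stated equivalence. I do not expect a genuine obstacle here: the content is a routine convex first‑order condition. The only points requiring care are justifying the affine parametrization of $V^*$ from Remark~\ref{cor:optimality_general}, and treating the degenerate case in which all $g_i = 0$ (permitted since the statement only assumes $g_i \ge 0$): there $\phi \equiv 0$, every $c$ is optimal, and the condition $\sum_i g_i(V_i - V^\text{nom}_i) = 0$ holds vacuously, so the equivalence remains valid.
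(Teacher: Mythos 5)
Your proposal is correct and follows essentially the same route as the paper: both reduce the minimization over $V^*$ to a one-dimensional problem via the parametrization $V = \bar V + c\,1_n$ (the paper writes $V = \Delta V + k 1_n$) and then apply the convex first-order condition in the scalar parameter. Your additional remarks on convexity and the degenerate case $g_i \equiv 0$ are sound but not needed beyond what the paper already asserts.
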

\begin{proof}
By considering the equilibrium of \eqref{eq:hvdc_vector}, the relative voltages $\Delta V$ are uniquely determined by $I^\text{inj}$ and $u$. Thus $V=\Delta V + k 1_n$, for some $k\in \mathbb{R}$. Taking the derivative of the quadratic cost function \eqref{eq:cost_voltage} with respect to $k$ thus corresponds to the  necessary and sufficient KKT condition for optimality, and yields:
\begin{align*}
\frac{\partial}{\partial k}\sum_{i \in \mathcal{V}} \frac 12 g_i(V^\text{nom}_i -V_i)^2  = \sum_{i =1}^n g_i( V_i  - V^\text{nom}_i ) = 0.
\end{align*} 
\end{proof}
 
\begin{remark}
The choice of the controller gains as detailed in Lemma~\ref{lemma:optimality}, is analogous to the controller gains in the AC frequency controller being inverse proportional to the coefficients of the quadratic generation cost function \cite{dorfler2014breaking}. 
\label{rem:optimality_AC}
\end{remark}
  
\section{Voltage droop control}
\label{subsec:model_droop}
In this section the VDM will be studied, as well as some of its limitations. VDM is a simple decentralized proportional controller taking the form
\begin{align}
u_i &= K^P_i(V_i^{\text{nom}}-V_i),
\label{eq:droop}
\tag{VDM}
\end{align}
where $V^{\text{nom}}$ is the nominal DC voltage. Alternatively, the controller \eqref{eq:droop} can be written in vector form as
\begin{align}
u &= K^P(V^{\text{nom}}-V),
\label{eq:droop_vector}
\end{align}
where $V^{\text{nom}}=[V^{\text{nom}}_1, \dots, V^{\text{nom}}_n]^T$ and $K^P=\diag([K^P_1, \dots, K^P_n])$.
The decentralized structure of the voltage droop controller is often advantageous for control of HVDC buses, as the time constant of the voltage dynamics is typically smaller than the communication delays between the buses. The DC voltage regulation is typically carried out by all buses. However, VDM possesses some severe drawbacks. Firstly, the voltages of the buses don't converge to a value close to the nominal voltages in general. 
Secondly, the controlled injected currents do not converge to the optimal value. 

\begin{theorem}
\label{th:droop_stability}
Consider an MTDC network described by \eqref{eq:hvdc_scalar}, where the control input $u_i$ is given by \eqref{eq:droop} and the injected currents $I_i^{\text{inj}}$ are constant. The equilibrium of the closed-loop system is stable for any $K^P>0$, in the sense that the voltages $V$ converge to some constant value. In general, Objective~\ref{obj:1} is not satisfied. However, the controlled injected currents satisfy $\lim_{t\rightarrow \infty} \sum_{i=1}^n (u^i+I_i^{\text{inj}}) = 0$. 
\end{theorem}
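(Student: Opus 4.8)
The plan is to substitute the droop law \eqref{eq:droop_vector} into the system dynamics \eqref{eq:hvdc_vector} and reduce the stability claim to a spectral property of the resulting system matrix. Inserting $u = K^P(V^{\text{nom}} - V)$ into \eqref{eq:hvdc_vector} gives the closed-loop dynamics
\begin{align*}
C\dot{V} = -(\mathcal{L}_R + K^P)V + I^{\text{inj}} + K^P V^{\text{nom}},
\end{align*}
or equivalently $\dot{V} = AV + b$ with $A = -C^{-1}(\mathcal{L}_R + K^P)$ and $b = C^{-1}(I^{\text{inj}} + K^P V^{\text{nom}})$. Since this is a linear time-invariant system with constant forcing, it suffices to show that $A$ is Hurwitz; this simultaneously guarantees that $A$ is invertible, so there is a unique equilibrium $V^* = -A^{-1}b$ to which every trajectory converges, establishing the first claim.

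The main technical step is establishing that $A$ is Hurwitz, and the obstacle is that $A$ is \emph{not} symmetric because of the diagonal factor $C^{-1}$, so the positive-definiteness of $\mathcal{L}_R + K^P$ cannot be invoked directly on $A$. I would circumvent this with the similarity transformation
\begin{align*}
C^{-1}(\mathcal{L}_R + K^P) = C^{-1/2}\big(C^{-1/2}(\mathcal{L}_R + K^P)C^{-1/2}\big)C^{1/2},
\end{align*}
which shows that $A$ is similar to $-C^{-1/2}(\mathcal{L}_R + K^P)C^{-1/2}$. The matrix inside the parentheses is symmetric, and it is positive definite because $\mathcal{L}_R \ge 0$ together with $K^P > 0$ gives $\mathcal{L}_R + K^P > 0$, a property preserved under the congruence by the nonsingular diagonal $C^{-1/2}$. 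Hence all eigenvalues of $A$ are real and strictly negative, so $A$ is Hurwitz. Note that the strict definiteness hinges entirely on the $K^P > 0$ term; without it one would only have the marginal zero eigenvalue of the bare Laplacian, and convergence to a constant would fail.

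To obtain the current-balance identity I would evaluate the dynamics at the equilibrium, where $\mathcal{L}_R V = I^{\text{inj}} + u$. Premultiplying by $1_n^T$ and using $1_n^T \mathcal{L}_R = 0$, which holds because the rows of the symmetric Laplacian sum to zero, annihilates the left-hand side and leaves $1_n^T(I^{\text{inj}} + u) = \sum_{i=1}^n(I_i^{\text{inj}} + u_i) = 0$, which is exactly the claimed steady-state relation.

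Finally, for the suboptimality claim I would compare the droop equilibrium against the optimality characterization of Lemma~\ref{lemma:optimality}. The latter requires $u = \mu F^{-1}1_n$ with $\mu$ fixed by $I^{\text{inj}}$ and $F$, whereas the droop equilibrium gives $u^* = K^P(V^{\text{nom}} - V^*)$, an expression governed instead by $K^P$, $V^{\text{nom}}$ and $\mathcal{L}_R$. These two forms coincide only for special parameter choices, so exhibiting a single network for which $K^P(V^{\text{nom}} - V^*) \ne \mu F^{-1}1_n$ is enough to conclude that Objective~\ref{obj:1} is not satisfied in general.
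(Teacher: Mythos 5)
Your proposal is correct, and the equilibrium/current-balance and suboptimality parts coincide with the paper's argument: the paper likewise premultiplies the equilibrium relation by $1_n^T$ and annihilates the Laplacian term, and its justification that $u\ne \mu F^{-1}1_n$ "in general" is at the same level of detail as yours (the explicit counterexample is deferred to Lemma~\ref{lemma:voltage_droop_obj_1_example}). Where you genuinely diverge is the stability step. The paper works with the characteristic polynomial, reducing $\det\left(sC+(\mathcal{L}_R+K^P)\right)=0$ to the scalar condition $s\,x^TCx + x^T(\mathcal{L}_R+K^P)x=0$ for some unit $x$ and invoking Routh--Hurwitz on a first-degree polynomial with positive coefficients. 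You instead observe that $C^{-1}(\mathcal{L}_R+K^P)$ is similar to the symmetric positive definite matrix $C^{-1/2}(\mathcal{L}_R+K^P)C^{-1/2}$. Both are valid; your congruence argument is arguably cleaner here and yields the slightly stronger conclusion that the spectrum of $A$ is real and negative (no oscillatory modes), whereas the paper's quadratic-form-on-the-matrix-pencil technique is the one that scales to the higher-order closed loops of Theorems~\ref{th:distributed_voltage_control} and \ref{th:stability_A}, where the pencil is quadratic or cubic in $s$ and no single symmetrizing similarity is available. One cosmetic remark: stability of the LTI system already follows from $A$ Hurwitz, and invertibility of $A$ (hence existence and uniqueness of the equilibrium $V^*=-A^{-1}b$) is immediate from $\mathcal{L}_R+K^P>0$, so your ordering of these two observations could be tightened, but nothing is missing.
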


\begin{proof}
The closed-loop dynamics of \eqref{eq:hvdc_vector} with $u$ given by \eqref{eq:droop} are
\begin{align}
\begin{aligned}
\dot{V} &= -E\mathcal{L}_R V + EK^P(V^{\text{nom}}-V) + EI^{\text{inj}} \\
&= \underbrace{-E(\mathcal{L}_R + K^P)}_{\triangleq A}V + EK^PV^{\text{nom}} + EI^{\text{inj}}.
\end{aligned}
\label{eq:hvdc_closed_loop_droop_vector}
\end{align}
Clearly the equilibrium of \eqref{eq:hvdc_closed_loop_droop_vector} is stable if and only if $A$ as defined above is Hurwitz. Consider the characteristic polynomial of $A$:
\begin{align*}
\begin{aligned}
0&= \det(sI_n-A) = \det\left(sI_n + E(\mathcal{L}_R + K^P)\right) \\  \Leftrightarrow
0&= \det \underbrace{\left(sC + (\mathcal{L}_R + K^P)\right)}_{\triangleq Q(s)}.
\end{aligned}
\end{align*}
The equation $0=\det\left(Q(s)\right)$ has a solution for a given $s$ only if $0=x^TQ(s)x$ has a solution for some $\norm{x}_2=1$. This gives
\begin{align*}
0&=s\underbrace{x^TCx}_{a_1} + \underbrace{x^T(\mathcal{L}_R + K^P)x}_{a_0}.
\end{align*}
Clearly $a_0, a_1>0$, which implies that the above equation has all its solutions $s\in \mathbb{C}^-$ by the Routh-Hurwitz stability criterion. This implies that the solutions of $0=\det(Q(s))$ satisfy $s\in \mathbb{C}^-$, and thus that $A$ is Hurwitz.

Now consider the equilibrium of \eqref{eq:hvdc_closed_loop_droop_vector}:
\begin{align}
0&= {-(\mathcal{L}_R + K^P)}V + K^PV^{\text{nom}} + I^{\text{inj}}.
\label{eq:hvdc_closed_loop_droop_vector_equilibrium}
\end{align}
Since $K^P>0$ by assumption $(\mathcal{L}_R + K^P)$ is invertible, which implies
\begin{align}
V=(\mathcal{L}_R + K^P)^{-1}\left( K^PV^{\text{nom}} + I^{\text{inj}} \right),
\label{eq:hvdc_closed_loop_droop_vector_equilibrium_voltage}
\end{align}
which does not satisfy Objective \ref{obj:1}, in general. By inserting \eqref{eq:hvdc_closed_loop_droop_vector_equilibrium_voltage} in \eqref{eq:droop_vector}, it is easily seen that
$$ u \ne \mu F^{-1} 1_{n}$$ in general. By Lemma \ref{lemma:optimality}, Objective \ref{obj:1} is thus in general not satisfied. Premultiplying \eqref{eq:hvdc_closed_loop_droop_vector_equilibrium} with $1_n^T C^{-1}$ yields
\begin{align*}
0 &= 1_n^T K^P(V^{\text{nom}} -V) + I^{\text{inj}} = \sum_{i=1}^n (u_i + I_i^{\text{inj}}). \qedhere
\end{align*} 
\end{proof}
Next, we construct explicitly a class of droop-controlled MTDC systems for which Objective~\ref{obj:1} is never satisfied. 
\begin{lemma}
\label{lemma:voltage_droop_obj_1_example}
Consider an MTDC network described by \eqref{eq:hvdc_scalar}, where the control input $u_i$ is given by \eqref{eq:droop} and the injected currents $I_i^{\text{inj}}\ne 0_n$ satisfy either $I_i^{\text{inj}} \le 0_n$ or $I_i^{\text{inj}} \ge 0_n$, where the inequality is strict for at least one element. Furthermore let $V^\text{nom}= v^\text{nom}1_n$. Then Equation~\eqref{eq:cost_voltage} in Objective~\ref{obj:1} is not minimized, regardless of the system and controller parameters.  
\end{lemma}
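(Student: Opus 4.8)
The plan is to invoke Lemma~\ref{cor:optimality_voltage}, which already reduces minimality of \eqref{eq:cost_voltage} to the single scalar condition $\sum_{i=1}^n g_i(V_i - V^{\text{nom}}_i) = 0$. I would therefore compute the droop equilibrium voltage deviation $V - V^{\text{nom}}$ in closed form and argue that, under the stated sign hypothesis on $I^{\text{inj}}$, every component of this deviation is strictly of one sign, so that the weighted sum above cannot vanish for nonnegative weights $g_i$ (not all zero, which is implicit for the voltage cost to be meaningful).

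First I would start from the droop equilibrium \eqref{eq:hvdc_closed_loop_droop_vector_equilibrium}, i.e.\ $(\mathcal{L}_R + K^P)V = K^P V^{\text{nom}} + I^{\text{inj}}$, and subtract $(\mathcal{L}_R + K^P)V^{\text{nom}}$ from both sides. Using $V^{\text{nom}} = v^{\text{nom}}1_n$ together with $\mathcal{L}_R 1_n = 0_n$, the term $\mathcal{L}_R V^{\text{nom}}$ vanishes and the two copies of $K^P V^{\text{nom}}$ cancel, leaving the clean identity
\begin{align*}
V - V^{\text{nom}} = (\mathcal{L}_R + K^P)^{-1} I^{\text{inj}}.
\end{align*}
Invertibility of $\mathcal{L}_R + K^P$ for $K^P>0$ is already established in the proof of Theorem~\ref{th:droop_stability}, so this expression is well defined.

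The main step, and the one I expect to be the crux, is determining the entrywise sign of $(\mathcal{L}_R + K^P)^{-1}$. The matrix $\mathcal{L}_R + K^P$ is symmetric positive definite and has nonpositive off-diagonal entries (the off-diagonals of $\mathcal{L}_R$ are $-1/R_{ij}\le 0$ and $K^P$ is diagonal), hence it is a Stieltjes matrix, i.e.\ a symmetric M-matrix. Because the transmission graph is connected, this matrix is irreducible, and a classical property of irreducible nonsingular M-matrices is that their inverse has only strictly positive entries. I would invoke this fact to conclude that $(\mathcal{L}_R + K^P)^{-1}$ is entrywise strictly positive.

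With this positivity in hand the conclusion is immediate. If $I^{\text{inj}} \ge 0_n$ with at least one strictly positive entry, then each component of $(\mathcal{L}_R + K^P)^{-1} I^{\text{inj}}$ is a strictly positive combination of nonnegative, not-all-zero numbers, so $V - V^{\text{nom}}$ is strictly positive componentwise; symmetrically, $I^{\text{inj}} \le 0_n$ yields $V - V^{\text{nom}}$ strictly negative componentwise. In either case every term $g_i(V_i - V^{\text{nom}}_i)$ shares the same sign, and since the $g_i$ are nonnegative and not all zero the sum $\sum_{i=1}^n g_i(V_i - V^{\text{nom}}_i)$ is strictly nonzero. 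By Lemma~\ref{cor:optimality_voltage} this violates the necessary and sufficient condition for \eqref{eq:cost_voltage} to be minimized. Since the argument relied only on the sign pattern of $I^{\text{inj}}$ and the M-matrix structure of $\mathcal{L}_R + K^P$, it holds regardless of the remaining system and controller parameters.
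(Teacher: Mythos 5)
Your proof is correct, but it reaches the key entrywise sign conclusion by a genuinely different route than the paper. The paper works directly with the equilibrium equation $(\mathcal{L}_R+K^P)\bar{V}=I^{\text{inj}}$, $\bar V = V - v^{\text{nom}}1_n$: it first premultiplies by $1_n^T$ (killing the Laplacian) to show the weighted sum $1_n^TK^P\bar V$ is strictly negative, so some component of $\bar V$ is negative, and then runs a discrete maximum-principle contradiction on the row corresponding to a hypothetical maximal nonnegative component to conclude $\bar V<0$ componentwise. You instead observe that $\mathcal{L}_R+K^P$ is an irreducible Stieltjes matrix and invoke the classical inverse-positivity of irreducible nonsingular M-matrices to get $(\mathcal{L}_R+K^P)^{-1}>0$ entrywise, from which the strict one-signedness of $\bar V=(\mathcal{L}_R+K^P)^{-1}I^{\text{inj}}$ is immediate; both proofs then finish identically via Lemma~\ref{cor:optimality_voltage}. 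Your route is shorter and arguably cleaner once the M-matrix fact is granted, and it packages the graph-connectivity hypothesis into irreducibility rather than into a combinatorial propagation argument; the paper's route is self-contained and elementary, at the cost of a slightly delicate case analysis at the maximizing node (where, if the maximal component equals zero, one must propagate equality through the connected graph before reaching the contradiction). One small point in your favor: you make explicit the implicit assumption that the $g_i$ are not all zero, which both arguments need for the final step.
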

\begin{proof}
The equilibrium of the closed-loop dynamics is given by
\begin{align}
\label{eq:eq_droop_counterexample}
(\mathcal{L}_R+K^P)(V-v^\text{nom}1_n) = I^\text{inj}.
\end{align}
For convenience, define $\bar{V}=V-v^\text{nom}1_n$. 
Without loss of generality, assume that $I_i^{\text{inj}} \le 0_n$. By premultiplying \eqref{eq:eq_droop_counterexample} with $1_n^T$, we obtain $1_n^TK^P\bar{V} < 0$. This implies that for at least one index $i_1$, $\bar{V}_{i_1} < 0$. Assume for the sake of contradiction that there exists an index $i_2$ such that $\bar{V}_{i_2} \ge 0$. We can without loss of generality assume $\bar{V}_{i_2} \ge \bar{V}_i \; \forall i \ne i_2$. By considering the $i_2$th element of \eqref{eq:eq_droop_counterexample}, we obtain
\begin{align*}
K^P_{i_2} \bar{V}{i_2} + \sum_{j \in \mathcal{N}_{i_2}} (\bar{V}_{i_2} - \bar{V}_j) \le 0.
\end{align*}
This implies that for at least one $j \in \mathcal{N}_{i_2}$ we have $\bar{V}_j > \bar{V}_{i_2}$, contradicting the assumption that $\bar{V}_{i_2} \ge \bar{V}_i \; \forall i \ne i_2$. Thus, $\bar{V} < 0$, and \eqref{eq:cost_voltage} in Objective~\ref{obj:1} can clearly not be minimized. 
\end{proof}
Generally when tuning the proportional gains $K^P$, there is a trade-off between the voltage errors and the optimality of the current injections. Low gains $K^P$ will result in closer to optimal current injections, but the voltages will be far from the reference value. On the other hand, having high gains  $K^P$ will ensure that the voltages converge close to the nominal voltage, at the expense of large deviations from the optimal current injections $u^*$. This rule of thumb is formalized in the following theorem.

\begin{theorem}
\label{th:droop_power_sharing}
Consider an MTDC network described by \eqref{eq:hvdc_scalar}, where the control input $u_i$ is given by \eqref{eq:droop} with positive gains $K^P_i=f_i^{-1}$, and constant injected currents $I_i^{\text{inj}}$. The DC voltages satisfy
\begin{IEEEeqnarray*}{llCl}
\lim_{K^P \rightarrow \infty } &\lim_{t\rightarrow \infty} V(t) &=& V^{\text{nom}} \\
\lim_{K^P \rightarrow 0} &\lim_{t\rightarrow \infty} V(t) &=& \sgn\big(\sum_{i=1}^n I_i^{\text{inj}}\big)\infty 1_n ,
\end{IEEEeqnarray*}
while the controlled injected currents satisfy
\begin{IEEEeqnarray*}{llCl}
\lim_{K^P \rightarrow \infty } &\lim_{t\rightarrow \infty} u(t) &=& -I^{\text{inj}} \\
\lim_{K^P \rightarrow 0} &\lim_{t\rightarrow \infty} u(t) &=& u^*,
\end{IEEEeqnarray*}
where the notation means
\begin{IEEEeqnarray*}{rCCCCCl}
K^P \rightarrow & \infty &\Leftrightarrow & K_i^P &\rightarrow & \infty \; & \forall i=1, \dots, n \\
K^P \rightarrow & 0 &\Leftrightarrow & K_i^P&\rightarrow & 0 \; & \forall i=1, \dots, n.
\end{IEEEeqnarray*}
\end{theorem}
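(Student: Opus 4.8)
The plan is to build everything on the closed-form equilibrium already obtained in the proof of Theorem~\ref{th:droop_stability}, namely
\[
V = (\mathcal{L}_R + K^P)^{-1}(K^P V^{\text{nom}} + I^{\text{inj}}),
\]
which, by the stability part of that theorem, is exactly $\lim_{t\to\infty}V(t)$. Subtracting $V^{\text{nom}}$ and using $(\mathcal{L}_R + K^P)V^{\text{nom}} = \mathcal{L}_R V^{\text{nom}} + K^P V^{\text{nom}}$ gives the cleaner identity $V - V^{\text{nom}} = (\mathcal{L}_R + K^P)^{-1}(I^{\text{inj}} - \mathcal{L}_R V^{\text{nom}})$. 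Taking equal nominal voltages $V^{\text{nom}} = v^{\text{nom}}1_n$ (the relevant case, cf.\ Lemma~\ref{lemma:voltage_droop_obj_1_example}), so that $\mathcal{L}_R V^{\text{nom}} = 0$ since $\ker\mathcal{L}_R = \operatorname{span}(1_n)$, this reduces to $V - V^{\text{nom}} = (\mathcal{L}_R + K^P)^{-1}I^{\text{inj}}$ and, via \eqref{eq:droop_vector}, $u = -K^P(\mathcal{L}_R + K^P)^{-1}I^{\text{inj}}$. Both limits then reduce to understanding the matrix $K^P(\mathcal{L}_R + K^P)^{-1}$.

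For $K^P\to\infty$ the analysis is easy. Since $\mathcal{L}_R \ge 0$ we have $\mathcal{L}_R + K^P \ge (\min_i K_i^P)I_n$, hence $\norm{(\mathcal{L}_R + K^P)^{-1}}_2 \le (\min_i K_i^P)^{-1}\to 0$, so $V\to V^{\text{nom}}$. For the currents I would use the algebraic identity $K^P(\mathcal{L}_R + K^P)^{-1} = I_n - \mathcal{L}_R(\mathcal{L}_R + K^P)^{-1}$; the second term vanishes in norm by the same bound, giving $K^P(\mathcal{L}_R + K^P)^{-1}\to I_n$ and therefore $u\to -I^{\text{inj}}$.

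The limit $K^P\to 0$ is the main obstacle, since $\mathcal{L}_R$ is singular and $(\mathcal{L}_R + K^P)^{-1}$ blows up precisely along $\ker\mathcal{L}_R = \operatorname{span}(1_n)$. I would write $K^P = \epsilon\hat K$ with $\hat K>0$ fixed and $\epsilon\to 0^+$ (this uniform scaling is also what keeps the target $u^*$ well defined, as $u^*$ depends only on the scale-invariant ratios $K_i^P/\sum_j K_j^P$). Conjugating by the invertible diagonal matrix $S=\hat K^{1/2}$ symmetrizes the problem:
\[
\epsilon\hat K(\mathcal{L}_R + \epsilon\hat K)^{-1} = S\big[\epsilon(\hat L + \epsilon I_n)^{-1}\big]S^{-1},\qquad \hat L \triangleq S^{-1}\mathcal{L}_R S^{-1}\ge 0,
\]
where $\hat L$ is fixed and, by connectivity, $\ker\hat L = \operatorname{span}(S1_n)$ is one-dimensional. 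Diagonalizing $\hat L$ shows $\epsilon(\hat L + \epsilon I_n)^{-1}\to \Pi$, the orthogonal projector onto $\ker\hat L$, so the whole expression converges to $S\Pi S^{-1} = \hat K 1_n 1_n^T/(1_n^T\hat K 1_n)$. Applying this to $I^{\text{inj}}$ yields $u\to -\big(\sum_i I_i^{\text{inj}}\big)/\big(\sum_j \hat k_j\big)\,\hat K 1_n$, which is exactly the $u^*$ of Lemma~\ref{lemma:optimality}.

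Finally, for the voltages in this limit the same transform gives $(\mathcal{L}_R + \epsilon\hat K)^{-1} = S^{-1}(\hat L + \epsilon I_n)^{-1}S^{-1} = \tfrac{1}{\epsilon}S^{-1}\Pi S^{-1} + O(1) = \tfrac{1}{\epsilon\sum_j\hat k_j}1_n 1_n^T + O(1)$, so that $V - V^{\text{nom}} = \big(\sum_i I_i^{\text{inj}}\big)/\big(\epsilon\sum_j\hat k_j\big)\,1_n + O(1)$; the transverse part stays bounded while the $1_n$-component diverges with the sign of $\sum_i I_i^{\text{inj}}$, giving $V\to \sgn\big(\sum_i I_i^{\text{inj}}\big)\infty 1_n$. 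The delicate point throughout is this $K^P\to 0$ analysis: one must isolate the single diverging direction cleanly, and the similarity transform by $\hat K^{1/2}$ is exactly what reduces the non-commuting pair $(\mathcal{L}_R,K^P)$ to a genuinely symmetric eigenvalue problem on which the projector limit is transparent.
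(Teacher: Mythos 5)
Your proof is correct and shares the paper's overall skeleton --- both arguments start from the closed-form equilibrium \eqref{eq:hvdc_closed_loop_droop_vector_equilibrium_voltage} established in Theorem~\ref{th:droop_stability} and treat the two limits separately --- but the execution of the delicate $K^P\to 0$ step is genuinely different. The paper eigendecomposes the $K^P$-dependent matrix $\mathcal{L}_R+K^P$, argues that its smallest eigenvalue tends to $0^+$ with limiting eigenvector $\tfrac{1}{\sqrt n}1_n$, and recovers the coefficient $a_1/\lambda_1$ indirectly by premultiplying the equilibrium with $1_n^T$; this requires reasoning about how eigenpairs of a varying matrix behave and is left informal. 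You instead fix the direction of approach, $K^P=\epsilon\hat K$, and conjugate by $\hat K^{1/2}$ so that everything reduces to the resolvent limit $\epsilon(\hat L+\epsilon I_n)^{-1}\to\Pi$ for a \emph{fixed} positive semidefinite matrix $\hat L$ with one-dimensional kernel; the limits of both $K^P(\mathcal{L}_R+K^P)^{-1}$ and $(\mathcal{L}_R+K^P)^{-1}$ then drop out of the explicit projector $\hat K 1_n1_n^T/(1_n^T\hat K 1_n)$. This buys rigor (no eigenvector perturbation), makes explicit the uniform-rate assumption needed for $u^*$ to be a well-defined target as $F=(K^P)^{-1}$ varies, and handles $K^P\to\infty$ via clean norm bounds and the identity $K^P(\mathcal{L}_R+K^P)^{-1}=I_n-\mathcal{L}_R(\mathcal{L}_R+K^P)^{-1}$ rather than the paper's formal replacement of $(\mathcal{L}_R+K^P)^{-1}$ by $(K^P)^{-1}$. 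One point to flag: you restrict to $V^{\text{nom}}=v^{\text{nom}}1_n$. For the voltage limits and for $K^P\to 0$ this is harmless (the $K^PV^{\text{nom}}$ term is asymptotically negligible either way), but for $\lim_{K^P\to\infty}u=-I^{\text{inj}}$ it is essential --- in general one gets $u\to\mathcal{L}_RV^{\text{nom}}-I^{\text{inj}}$ --- so you have surfaced, and repaired, an implicit hypothesis that the paper's own proof silently drops.
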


\begin{proof}
Let us first consider the case when $K^P \rightarrow \infty$. In the equilibrium of \eqref{eq:hvdc_closed_loop_droop_vector}, the voltages satisfy by \eqref{eq:hvdc_closed_loop_droop_vector_equilibrium_voltage}:
\begin{align*}
 \lim_{K^P \rightarrow \infty}  V &= \lim_{K^P \rightarrow \infty} (\mathcal{L}_R + K^P)^{-1}\left( K^PV^{\text{nom}} + I^{\text{inj}} \right) \\
&= \lim_{K^P \rightarrow \infty} ( K^P)^{-1}\left( K^PV^{\text{nom}} + I^{\text{inj}} \right) = V^{\text{nom}}.
\end{align*}
By inserting the above expression for the voltages, the controlled injected currents are given by
\begin{align*}
\lim_{K^P \rightarrow \infty} u &= \lim_{K^P \rightarrow \infty} K^P\left( V^{\text{nom}} -V   \right) \\
&= \lim_{K^P \rightarrow \infty} K^P \left( - (K^P)^{-1}I^{\text{inj}}  \right){=}{-}I^{\text{inj}}.
\end{align*}

Now consider the case when ${K^P \rightarrow 0}$. Since $(\mathcal{L}_R + K^P)$ is real and symmetric, any vector in $\mathbb{R}^n$ can be expressed as a linear combination of its eigenvectors. Denote by $(v_i, \lambda_i)$ the eigenvector and eigenvalue pair $i$ of $(\mathcal{L}_R + K^P)$. Write
\begin{align}
\left( K^PV^{\text{nom}} + I^{\text{inj}} \right) &= \sum_{i=1}^n a_i v_i,
\label{eq:eigendecomposition_1}
\end{align}
where $a_i, i=1, \dots, n$ are real constants. The equilibrium of \eqref{eq:hvdc_closed_loop_droop_vector} implies that the voltages satisfy
\begin{align*}
\lim_{K^P \rightarrow 0}  V &= \lim_{K^P \rightarrow 0} (\mathcal{L}_R + K^P)^{-1}\left( K^PV^{\text{nom}} + I^{\text{inj}} \right) \\
&= \lim_{K^P \rightarrow 0} (\mathcal{L}_R + K^P)^{-1}\sum_{i=1}^n a_i v_i \\&=\lim_{K^P \rightarrow 0} \sum_{i=1}^n \frac{a_i}{\lambda_i} v_i = \frac{a_1}{\lambda_1} v_1,
\end{align*}
where $\lambda_1$ is the smallest eigenvalue of $(\mathcal{L}_R + K^P)$, which clearly satisfies $\lambda_1 \rightarrow 0^+$ as ${K_i^P\rightarrow 0 \; \forall i=1, \dots, n}$. Hence the last equality in the above equation holds. By letting ${K^P \rightarrow 0}$ and premultiplying   \eqref{eq:eigendecomposition_1} with $v_1^T=1/n1_{n}$, we obtain $a_1=(\frac{1}{n} \sum_{i=1}^n I_i^{\text{inj}})$ since the eigenvectors of $(\mathcal{L}_R + K^P)$ form an orthonormal basis of $\mathbb{R}^n$. Thus $\lim_{K^P \rightarrow 0} \lim_{t\rightarrow \infty} V(t) = \sgn\left(\sum_{i=1}^n I_i^{\text{inj}}\right)\infty_n$. 
Finally the controlled injected currents are given by
\begin{align*}
\lim_{K^P \rightarrow 0} u &= \lim_{K^P \rightarrow 0} K^P(V^{\text{nom}} - V) \\
&= \lim_{K^P \rightarrow 0} K^P\big(V^{\text{nom}} - \frac{a_1}{\lambda_1} 1_{n}\big) = -\frac{a_1}{\lambda_1} K^P 1_{n}.
\end{align*}
By premultiplying \eqref{eq:hvdc_closed_loop_droop_vector_equilibrium} with $1_n^T C^{-1}$ we obtain
\begin{align*}
1_n^T K^P(V^{\text{nom}} - V) = - 1_n^T I^{\text{inj}},
\end{align*}
which implies that
\begin{align*}
\frac{a_1}{\lambda_1} &= \frac{1_n^T I^{\text{inj}}}{1_n^T  K^P 1_{n} } = (\sum_{i=1}^n I_i^{\text{inj}})/(\sum_{i=1}^n K^P_i),
\end{align*}
which gives $u=u^*$ due to Lemma \ref{lemma:optimality}. 
\end{proof}

\section{Distributed MTDC control}
\label{sec:dist_control}
The shortcomings of the VDM control, as indicated in Theorem \ref{th:droop_stability}, motivate the development of novel controllers for MTDC networks. In this section we present three distributed controllers for MTDC networks, allowing for communication between HVDC buses. The use of a  communication network allows for distributed controllers, all fulfilling Objective~\ref{obj:1} but with specific advantages and disadvantages. 
Controllers \eqref{eq:distributed_voltage_control} and \eqref{eq:distributed_voltage_control_complete} have the advantage of only requiring $n$ additional controller variables, but \eqref{eq:distributed_voltage_control} suffers from poor redundancy and \eqref{eq:distributed_voltage_control_complete} requires a complete communication topology. The controller \eqref{eq:distributed_voltage_control_third} does not suffer from poor redundancy and can be implemented using any connected distributed  communication topology, but at the cost of requiring $2n$ additional controller variables. 
 The architectures of the controllers proposed later on in this section are illustrated in Figure~\ref{fig:control_architecture}.

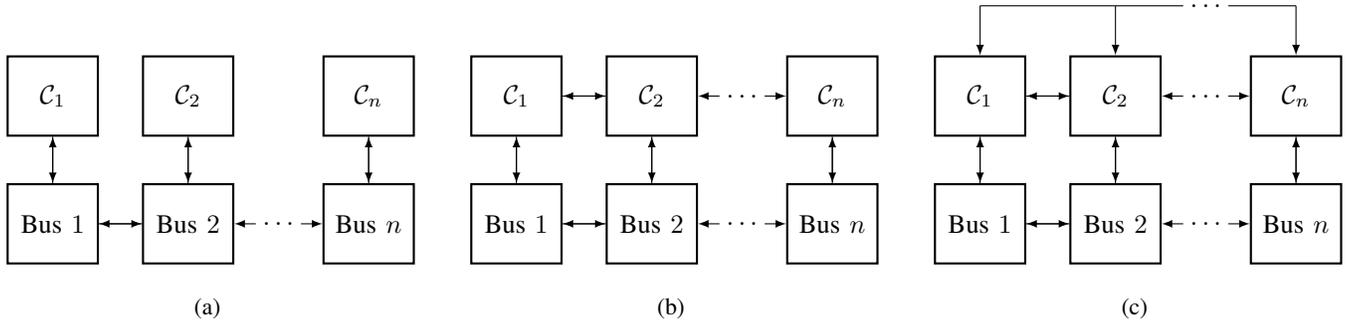
\begin{figure*}
\begin{center}

\begin{subfigure}[b]{0.3\textwidth}
\begin{center}
\begin{tikzpicture}[auto, >=latex] 
	
	\node [block, node distance =1.0cm, minimum width=1.2cm] (b1) {Bus $1$};
	\node [block, right of=b1, node distance =1.8cm, minimum width=1.2cm] (b2) {Bus $2$};
	\node [block, right of=b2, node distance =2.4cm, minimum width=1.2cm] (bn) {Bus $n$};
	\node [block, above of=b1,node distance =1.7cm, minimum width=1.2cm] (c1) {$\mathcal{C}_1$};
	\node [block, above of=b2,node distance =1.7cm, minimum width=1.2cm] (c2) {$\mathcal{C}_2$};
	\node [block, above of=bn,node distance =1.7cm, minimum width=1.2cm] (cn) {$\mathcal{C}_n$};
	
	\node [output, right of=b2,node distance =0.9cm] (b2r) {};
	\node [output, left  of=bn,node distance =0.9cm] (bnl) {};
	\node [draw=none,fill=none, right of=b2,node distance =1.2cm] (b2ndots) {$\ldots$};
		
	
	\draw [connector] (b1) -- (b2) {};
	\draw [connector] (b2) -- (b1) {};
	\draw [connector] (b2r) -- (b2) {};
	\draw [connector] (bnl) -- (bn) {};

	\draw [connector] (b1) -- (c1) {};
	\draw [connector] (c1) -- (b1) {};
	\draw [connector] (b2) -- (c2) {};
	\draw [connector] (c2) -- (b2) {};	
	\draw [connector] (bn) -- (cn) {};
	\draw [connector] (cn) -- (bn) {};	
	
\end{tikzpicture}
\end{center}
\caption{}
\label{fig:C.a}
\end{subfigure}
\qquad 
\begin{subfigure}[b]{0.3\textwidth}
\begin{center}
\begin{tikzpicture}[auto, >=latex] 
	
	\node [block, node distance =1.0cm, minimum width=1.2cm] (b1) {Bus $1$};
	\node [block, right of=b1, node distance =1.8cm, minimum width=1.2cm] (b2) {Bus $2$};
	\node [block, right of=b2, node distance =2.4cm, minimum width=1.2cm] (bn) {Bus $n$};
	\node [block, above of=b1,node distance =1.7cm, minimum width=1.2cm] (c1) {$\mathcal{C}_1$};
	\node [block, above of=b2,node distance =1.7cm, minimum width=1.2cm] (c2) {$\mathcal{C}_2$};
	\node [block, above of=bn,node distance =1.7cm, minimum width=1.2cm] (cn) {$\mathcal{C}_n$};
	
	\node [output, right of=b2,node distance =0.9cm] (b2r) {};
	\node [output, left  of=bn,node distance =0.9cm] (bnl) {};
	\node [draw=none,fill=none, right of=b2,node distance =1.2cm] (b2ndots) {$\ldots$};
	\node [output, right of=c2,node distance =0.9cm] (c2r) {};
	\node [output, left  of=cn,node distance =0.9cm] (cnl) {};
	\node [draw=none,fill=none, right of=c2,node distance =1.2cm] (c2ndots) {$\ldots$};

	
	\draw [connector] (b1) -- (b2) {};
	\draw [connector] (b2) -- (b1) {};
	\draw [connector] (b2r) -- (b2) {};
	\draw [connector] (bnl) -- (bn) {};
	
	\draw [connector] (c1) -- (c2) {};
	\draw [connector] (c2) -- (c1) {};
	\draw [connector] (c2r) -- (c2) {};
	\draw [connector] (cnl) -- (cn) {};
	
	\draw [connector] (b1) -- (c1) {};
	\draw [connector] (c1) -- (b1) {};
	\draw [connector] (b2) -- (c2) {};
	\draw [connector] (c2) -- (b2) {};	
	\draw [connector] (bn) -- (cn) {};
	\draw [connector] (cn) -- (bn) {};	
	
\end{tikzpicture}
\end{center}
\caption{}
\label{fig:C.b}
\end{subfigure}
\qquad 
\begin{subfigure}[b]{0.3\textwidth}
\begin{center}
\begin{tikzpicture}[auto, >=latex]
	
	\node [block, node distance =1.0cm, minimum width=1.2cm] (b1) {Bus $1$};
	\node [block, right of=b1, node distance =1.8cm, minimum width=1.2cm] (b2) {Bus $2$};
	\node [block, right of=b2, node distance =2.4cm, minimum width=1.2cm] (bn) {Bus $n$};
	\node [block, above of=b1,node distance =1.7cm, minimum width=1.2cm] (c1) {$\mathcal{C}_1$};
	\node [block, above of=b2,node distance =1.7cm, minimum width=1.2cm] (c2) {$\mathcal{C}_2$};
	\node [block, above of=bn,node distance =1.7cm, minimum width=1.2cm] (cn) {$\mathcal{C}_n$};


	\node [output, above of=c1,node distance =1.2cm] (h1) {};
	\node [output, above of=c2,node distance =1.2cm] (h2) {};
	\node [output, above of=cn,node distance =1.2cm] (hn) {};
	
	\node [output, right of=b2,node distance =0.9cm] (b2r) {};
	\node [output, left  of=bn,node distance =0.9cm] (bnl) {};
	\node [draw=none,fill=none, right of=b2,node distance =1.2cm] (b2ndots) {$\ldots$};
	
	\node [output, right of=c2,node distance =0.9cm] (c2r) {};
	\node [output, left  of=cn,node distance =0.9cm] (cnl) {};
	\node [draw=none,fill=none, right of=c2,node distance =1.2cm] (c2ndots) {$\ldots$};
	
	\node [output, right of=h2,node distance =0.9cm] (h2r) {};
	\node [output, left  of=hn,node distance =0.9cm] (hnl) {};
	\node [draw=none,fill=none, right of=h2,node distance =1.2cm] (h2ndots) {$\ldots$};
	
	
	\draw [connector] (b1) -- (b2) {};
	\draw [connector] (b2) -- (b1) {};
	\draw [connector] (b2r) -- (b2) {};
	\draw [connector] (bnl) -- (bn) {};
	
	\draw [connector] (c1) -- (c2) {};
	\draw [connector] (c2) -- (c1) {};
	\draw [connector] (c2r) -- (c2) {};
	\draw [connector] (cnl) -- (cn) {};
	
	\draw [connector] (b1) -- (c1) {};
	\draw [connector] (c1) -- (b1) {};
	\draw [connector] (b2) -- (c2) {};
	\draw [connector] (c2) -- (b2) {};	
	\draw [connector] (bn) -- (cn) {};
	\draw [connector] (cn) -- (bn) {};	
	
	\draw [connector] (h1) -- (c1) {};
	\draw [connector] (h2) -- (c2) {};
	\draw [connector] (hn) -- (cn) {};
	
	\draw [line] (h1) -- (h2) {};
	\draw [line] (h2) -- (h2r) {};
	\draw [line] (hnl) -- (hn) {};

\end{tikzpicture}
\end{center}
\caption{}
\label{fig:C.c}
\end{subfigure}

\caption{\eqref{fig:C.a} shows the decentralized architecture of the voltage droop controller \eqref{eq:droop}. \eqref{fig:C.b} shows the distributed architecture of Controllers~\eqref{eq:distributed_voltage_control} and \eqref{eq:distributed_voltage_control_third}. \eqref{fig:C.c} shows the architecture of Controller~\eqref{eq:distributed_voltage_control_complete}, with  all-to-all communication.}
\label{fig:control_architecture}
\end{center}
\end{figure*}

\subsection{Distributed averaging controller I}
\label{sec:dist_control_1}
In this section we propose the following distributed controller for MTDC networks which allows for communication between the buses:
\begin{align}
u_i = \; &K_i^P(\hat{V}_i -V_i) \IEEEnonumber \\
\dot{\hat{V}}_i = \; &K^V_i(V^{\text{nom}}_i{-}V_i) \IEEEnonumber \\
&-\gamma \sum_{j\in \mathcal{N}_i} c_{ij} \left( (\hat{V}_i -V_i){-}(\hat{V}_j -V_j) \right),
\label{eq:distributed_voltage_control}
\tag{I}
\end{align}
where $\gamma>0$ is a constant, $K_i^P>0, i=1, \dots, n$, and
\begin{align*}
K^V_i &= \left\{ \begin{array}{ll}
K^V_1>0 & \text{if } i=1 \\
0 & \text{otherwise.}
\end{array} \right.
\end{align*}
This controller can be understood as a proportional control loop (consisting of the first line), and an integral control loop (consisting of the second line). The internal controller variables $\hat{V}_i$ can be understood as reference values for the proportional control loops, regulated by the integral control loop. 
Bus $i=1$, without loss of generality, acts as an integral voltage regulator. The first line of \eqref{eq:distributed_voltage_control} ensures that the controlled injected currents are quickly adjusted after a change in the voltage. The parameter $c_{ij}=c_{ji}>0$ is a constant, and $\mathcal{N}_i$ denotes the set of buses which can communicate with bus $i$. The communication graph is assumed to be undirected, i.e., $j\in \mathcal{N}_i \Leftrightarrow i\in \mathcal{N}_j$. The second line ensures that the voltage is restored at bus $1$ by integral action, and that the controlled injected currents converge to the optimal value, as proven later on. In vector-form, \eqref{eq:distributed_voltage_control} can be written as
\begin{align}
u &= K^P (\hat{V} -V)\nonumber \\
\dot{\hat{V}} &= K^V (V^{\text{nom}}_1 1_{n} -V) -\gamma \mathcal{L}_c (\hat{V}-V) ,\label{eq:distributed_voltage_control_vector}
\end{align}
where $K^P$ is defined as before, $K^V=\diag([K^V_1, 0, \dots, 0])$, and $\mathcal{L}_c$ is the weighted Laplacian matrix of the graph representing the communication topology, denoted $\mathcal{G}_c$, whose edge-weights are given by $c_{ij}$, and which is assumed to be connected.
 The following theorem shows that the proposed controller \eqref{eq:distributed_voltage_control} has desirable properties which the droop controller \eqref{eq:droop} is lacking. It also gives sufficient conditions for which controller parameters stabilize the equilibrium of the closed-loop system.

\begin{theorem}
\label{th:distributed_voltage_control}
Consider an MTDC network described by \eqref{eq:hvdc_scalar}, where the control input $u_i$ is given by \eqref{eq:distributed_voltage_control} and the injected currents $I^{\text{inj}}$ are constant.
The equilibrium of the closed-loop system is stable if
\begin{align}
&\frac{1}{2}\lambda_{\min} \left( (K^P)^{-1}\mathcal{L}_R + \mathcal{L}_R(K^P)^{-1} \right) + 1 \nonumber \\
&+ \frac{\gamma}{2}\lambda_{\min} \left( \mathcal{L}_c (K^P)^{-1} C + C  (K^P)^{-1} \mathcal{L}_c \right)  >0
 \label{eq:stability_cond_distributed_1} \\
& \lambda_{\min} \left(  \mathcal{L}_c (K^P)^{-1} \mathcal{L}_R + \mathcal{L}_R (K^P)^{-1} \mathcal{L}_c \right) \ge 0. \label{eq:stability_cond_distributed_2}
\end{align}
Furthermore, $\lim_{t\rightarrow \infty} V_1(t) = V^{\text{nom}}$, and if $K^P=F^{-1}$ then
$\lim_{t\rightarrow  \infty } u(t) = u^*$. This implies that Objective~\ref{obj:1} is satisfied given that $g_1=1$ and $g_i=0$ for all $i\ge 2$. 
\end{theorem}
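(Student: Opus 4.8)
The plan is to substitute $u=K^P(\hat V-V)$ into \eqref{eq:hvdc_vector} and treat the result together with \eqref{eq:distributed_voltage_control_vector} as a time-invariant affine linear system in the stacked state $(V,\hat V)$, driven by the constant vector built from $I^{\text{inj}}$ and $V^{\text{nom}}_1$. Denoting its $2n\times2n$ system matrix by $A$, the whole theorem reduces to showing $A$ is Hurwitz: a Hurwitz $A$ forces a unique equilibrium toward which every trajectory converges, and all the asymptotic claims ($V_1\to V^{\text{nom}}$, $u\to u^*$, Objective~\ref{obj:1}) are then statements about that equilibrium. I would therefore split the work into (i) the stability certificate under \eqref{eq:stability_cond_distributed_1}--\eqref{eq:stability_cond_distributed_2}, and (ii) a direct computation of the equilibrium.

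For (i) I would follow the characteristic-polynomial strategy of Theorem~\ref{th:droop_stability}. Writing the eigenvalue equations for $A$ blockwise as $(sC+\mathcal{L}_R+K^P)x=K^P y$ and $(K^V-\gamma\mathcal{L}_c)x+(sI+\gamma\mathcal{L}_c)y=0$, solving the first for $y$ and substituting into the second eliminates $y$ and collapses the $2n\times2n$ problem to $\det P(s)=0$ with
\[
\begin{aligned}
P(s)={}&s^2(K^P)^{-1}C+s\big[I+(K^P)^{-1}\mathcal{L}_R+\gamma\mathcal{L}_c(K^P)^{-1}C\big]\\
&+K^V+\gamma\mathcal{L}_c(K^P)^{-1}\mathcal{L}_R.
\end{aligned}
\]
Exactly as in Theorem~\ref{th:droop_stability}, a root of $\det P(s)$ must satisfy $x^{*}P(s)x=0$ for some unit eigenvector $x$, which is the scalar quadratic $a_2s^2+a_1s+a_0=0$ with $a_2=x^{*}(K^P)^{-1}Cx$, $a_1=x^{*}[I+(K^P)^{-1}\mathcal{L}_R+\gamma\mathcal{L}_c(K^P)^{-1}C]x$ and $a_0=x^{*}[K^V+\gamma\mathcal{L}_c(K^P)^{-1}\mathcal{L}_R]x$.

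The two hypotheses are tailored precisely to this quadratic. Since $(K^P)^{-1}C$ is diagonal and positive definite, $a_2>0$ is real. Taking real parts replaces each coefficient matrix by its symmetric part, so
\[
\begin{aligned}
\mathrm{Re}(a_1)={}&1+x^{*}\tfrac12\big((K^P)^{-1}\mathcal{L}_R+\mathcal{L}_R(K^P)^{-1}\big)x\\
&+\gamma\,x^{*}\tfrac12\big(\mathcal{L}_c(K^P)^{-1}C+C(K^P)^{-1}\mathcal{L}_c\big)x,
\end{aligned}
\]
which is bounded below by the left-hand side of \eqref{eq:stability_cond_distributed_1} and is hence strictly positive, while $\mathrm{Re}(a_0)=x^{*}K^Vx+\gamma\,x^{*}\tfrac12(\mathcal{L}_c(K^P)^{-1}\mathcal{L}_R+\mathcal{L}_R(K^P)^{-1}\mathcal{L}_c)x\ge0$ by $K^V\ge0$ and \eqref{eq:stability_cond_distributed_2}. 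The hard part will be turning these coefficient bounds into root locations: the products $(K^P)^{-1}\mathcal{L}_R$, $\mathcal{L}_c(K^P)^{-1}C$ and $\mathcal{L}_c(K^P)^{-1}\mathcal{L}_R$ are not symmetric, so although $\det P(s)$ is a real polynomial, the per-direction quadratic has genuinely complex $a_1,a_0$, and the one-line Routh--Hurwitz argument of Theorem~\ref{th:droop_stability} does not carry over. I would close this gap by passing to the real polynomial $p(s)\overline{p}(s)$ associated with the quadratic (whose roots are those of $p$ together with their conjugates) and applying the real stability test, and separately excluding imaginary-axis roots --- in particular $s=0$, since a short kernel argument using connectedness of $\mathcal{G}_c$ and the structure of $K^V$ shows $P(0)$ is nonsingular --- so that $\bar{\mathbb{C}}^-$ can be sharpened to $\mathbb{C}^-$. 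This passage is the crux, and is where the interplay between \eqref{eq:stability_cond_distributed_1}, \eqref{eq:stability_cond_distributed_2} and the imaginary parts of $a_1,a_0$ must really be exploited (e.g.\ under the design $\mathcal{L}_c=\mathcal{L}_R$ highlighted around the theorem).

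For (ii) I would evaluate \eqref{eq:distributed_voltage_control_vector} at steady state. Premultiplying the zeroed $\dot{\hat V}$ equation by $1_n^{T}$ and using $1_n^{T}\mathcal{L}_c=0$ annihilates the consensus term, leaving $1_n^{T}K^V(V^{\text{nom}}_1 1_n-V)=K^V_1(V^{\text{nom}}_1-V_1)=0$, i.e.\ $V_1\to V^{\text{nom}}$. Substituting this back makes the entire vector $K^V(V^{\text{nom}}_1 1_n-V)$ vanish, so $\mathcal{L}_c(\hat V-V)=\mathcal{L}_c(K^P)^{-1}u=0$; connectedness of $\mathcal{G}_c$ gives $\ker\mathcal{L}_c=\mathrm{span}(1_n)$, whence $(K^P)^{-1}u=c\,1_n$ and $u=c\,K^P 1_n$. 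With $K^P=F^{-1}$ this is $u=c\,F^{-1}1_n$, exactly the optimal form in Lemma~\ref{lemma:optimality}; premultiplying the voltage equilibrium $\mathcal{L}_R V=I^{\text{inj}}+u$ by $1_n^{T}$ pins down $c=-(\sum_i I_i^{\text{inj}})/(\sum_i f_i^{-1})=\mu$, so $u\to u^*$. Finally, choosing $g_1=1$ and $g_i=0$ for $i\ge2$ collapses the optimality condition of Lemma~\ref{cor:optimality_voltage} to $V_1=V^{\text{nom}}$, already shown, so \eqref{eq:cost_voltage} is minimized and Objective~\ref{obj:1} is met.
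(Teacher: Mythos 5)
Your proposal follows the paper's own route essentially step for step: the same block elimination collapsing the $2n\times 2n$ eigenvalue problem to the quadratic matrix polynomial (your $P(s)$ is exactly the paper's $Q(s)$, and your direct substitution $y=(K^P)^{-1}(sC+\mathcal{L}_R+K^P)x$ is in fact cleaner than the paper's Schur-complement manipulation, which needs a side remark about $|sI_n+\gamma\mathcal{L}_c|=0$); the same scalarization $x^{*}P(s)x=0$ with conditions \eqref{eq:stability_cond_distributed_1}--\eqref{eq:stability_cond_distributed_2} read off as positivity of the coefficients; and an equilibrium analysis (premultiply by $1_n^T$, kill the $\mathcal{L}_c$ term, conclude $V_1=V^{\text{nom}}$, then $\hat V-V=k1_n$ and $u=kK^P1_n$, invoke Lemmas~\ref{lemma:optimality} and~\ref{cor:optimality_voltage}) that is complete and matches the paper's. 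The one step you leave unfinished --- passing from $\mathrm{Re}(a_1)>0$, $\mathrm{Re}(a_0)\ge 0$ to the conclusion that the roots of the per-direction quadratic lie in $\mathbb{C}^-$, given that the coefficient matrices $(K^P)^{-1}\mathcal{L}_R$, $\mathcal{L}_c(K^P)^{-1}C$ and $\mathcal{L}_c(K^P)^{-1}\mathcal{L}_R$ are not symmetric so $a_0,a_1$ are complex for complex $x$ --- is precisely the step the paper dispatches with a bare appeal to the Routh--Hurwitz criterion, implicitly treating $x$ as real and the $a_i$ as real positive scalars (note the paper writes $x^TQ(s)x$, not $x^{*}Q(s)x$, and argues $a_0>0$ by combining \eqref{eq:stability_cond_distributed_2} with the observation that the symmetrized product vanishes only at $x=1_n/\sqrt n$, where $x^TK^Vx=K_1^V/n>0$). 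So you have not omitted anything the paper actually supplies; you have instead flagged, correctly, the weakest link in the published argument and sketched a plausible repair ($p\bar p$ plus a separate exclusion of imaginary-axis roots via nonsingularity of $P(0)$) without carrying it out. To count as a finished proof your write-up would need either to execute that repair or to adopt the paper's implicit convention; as a reconstruction of the paper's reasoning it is faithful and, on the equilibrium half, complete.
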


\begin{proof}
The closed-loop dynamics of \eqref{eq:hvdc_vector} with the controlled injected currents $u$ given by \eqref{eq:distributed_voltage_control_vector} are given by
\begin{align}
\begin{bmatrix}
\dot{\hat{V}} \\ \dot{V}
\end{bmatrix}
&{=}
\underbrace{
\begin{bmatrix}
-\gamma \mathcal{L}_c & \gamma \mathcal{L}_c -K^V \\
EK^P & -E(\mathcal{L}_R +K^P)
\end{bmatrix}}_{\triangleq A}
\begin{bmatrix}
{\hat{V}} \\ {V}
\end{bmatrix}
{+}
\begin{bmatrix}
K^V V^{\text{nom}}1_{n}  \\
CI^{\text{inj}}
\end{bmatrix}.
\label{eq:closed_loop_distributed_vector}
\end{align}
The characteristic equation of $A$ is given by
\begin{IEEEeqnarray*}{rCl}
0 &=& \det(sI_{2n} -A) = \left| \begin{matrix}
sI_n +\gamma \mathcal{L}_c & -\gamma \mathcal{L}_c + K^V \\
-EK^P & sI_n + E(\mathcal{L}_R +K^P)
\end{matrix} \right| \\
&=& \frac{|CK^P|}{|sI_n+\gamma \mathcal{L}_c|} 
\left| \begin{matrix}
sI_n +\gamma \mathcal{L}_c & -\gamma \mathcal{L}_c + K^V \\
-sI_n-\gamma\mathcal{L}_c & \begin{matrix}
(sI_n+\gamma\mathcal{L}_c)(K^P)^{-1}C\cdot \\ (sI_n + E(\mathcal{L}_R +K^P))
\end{matrix} 
\end{matrix} \right| \\
&=& |CK^P| |(sI_n+\gamma\mathcal{L}_c)(K^P)^{-1}C (sI_n + E(\mathcal{L}_R +K^P)) \\ 
&&-\gamma \mathcal{L}_c + K^V |  \\
&=& |EK^P| \left| (\gamma \mathcal{L}_c(K^P)^{-1}\mathcal{L}_R +K^V) + s((K^P)^{-1}\mathcal{L}_R +I_n \right. \\
&&+\left. \gamma\mathcal{L}_c(K^P)^{-1}C) +s^2((K^P)^{-1}C)  \right|
\\
 &\triangleq & |EK^P| \det(Q(s)).
\end{IEEEeqnarray*}
This assumes that $|sI_n+\gamma \mathcal{L}_c|\ne 0$, however $|sI_n+\gamma \mathcal{L}_c|= 0$ implies $s =0$ or $s\in \mathbb{C}^-$. By elementary column operations, $A$ is shown to be full rank. This still implies that all solutions satisfy $s\in \mathbb{C}^-$. 
Now, the above equation has a solution only if $x^TQ(s)x=0$ for some $x:\; \norm{x}_2=1$. This condition gives the following equation
\begin{IEEEeqnarray*}{rCl}
0&=&  \underbrace{x^T(\gamma \mathcal{L}_c (K^P)^{-1} \mathcal{L}_R + K^V)x}_{a_0}  \\
&& + s\underbrace{x^T((K^P)^{-1}\mathcal{L}_R + I_n + \gamma \mathcal{L}_c(K^P)^{-1}C )x}_{a_1} \\ 
&& + s^2 \underbrace{x^T((K^P)^{-1}C)x}_{a_2},
\end{IEEEeqnarray*}
which by the Routh-Hurwitz stability criterion has all solutions $s\in \mathbb{C}^-$ if and only if $a_i>0$ for $i=0,1,2$. 

Clearly, $a_2>0$, since $((K^P)^{-1}C)$ is diagonal with positive elements. It is easily verified that $a_1>0$ if
\begin{align*}
&\frac{1}{2}\lambda_{\min} \left( (K^P)^{-1}\mathcal{L}_R + \mathcal{L}_R(K^P)^{-1} \right) \\
&+ \frac{\gamma}{2}\lambda_{\min} \left( \mathcal{L}_c (K^P)^{-1} C + C (K^P)^{-1} \mathcal{L}_c \right) + 1 >0.
\end{align*}
Finally, clearly $x^T( \mathcal{L}_c (K^P)^{-1} \mathcal{L}_R)x \ge 0$ for any $x:\; \norm{x}_2=1$ if and only if
\begin{align*}
\frac 12 \lambda_{\min} \left(  \mathcal{L}_c (K^P)^{-1} \mathcal{L}_R + \mathcal{L}_R (K^P)^{-1} \mathcal{L}_c \right) &\ge 0.
\end{align*}
Since the graphs corresponding to $\mathcal{L}_R$ and $\mathcal{L}_c$ are both assumed to be connected, the only $x$ for which $x^T( \mathcal{L}_c (K^P)^{-1} \mathcal{L}_R)x = 0$ is $x=\frac{1}{\sqrt{n}}[1, \dots, 1]^T$. Given this $x=\frac{1}{\sqrt{n}}[1, \dots, 1]^T$, $x^T K^Vx=\frac{1}{n} K^V_1>0$. Thus, $a_0>0$ gives that the above inequality holds. Thus, under assumptions \eqref{eq:stability_cond_distributed_1}--\eqref{eq:stability_cond_distributed_2}, $A$ is Hurwitz, and thus the equilibrium of the closed-loop system is stable. 

Now consider the equilibrium of \eqref{eq:closed_loop_distributed_vector}. Premultiplying the first $n$ rows with $1_n^T$ yields $0=1_n^T K^V(V^{\text{nom}}1_{n}-V) = K^V_1(V^{\text{nom}}-V_1)$. Clearly this minimizes \eqref{eq:cost_voltage}, with the minimal value $0$. Inserting this back to the first $n$ rows of \eqref{eq:closed_loop_distributed_vector} yields $0=\mathcal{L}_c (V-\hat{V})$, implying that $(V-\hat{V}) = k1_{n}$. It should be noted here that if $K^V_i > 0$ for at least one $i \ge 2$, then the first $n$ rows of \eqref{eq:closed_loop_distributed_vector} do not imply $(V-\hat{V}) = k1_{n}$ in general. 
 Inserting the relation $(V-\hat{V}) = k1_{n}$ in \eqref{eq:distributed_voltage_control_vector} gives $u=K^P(V-\hat{V}) =  k K^P1_{n}$. Setting $K^P=F^{-1}$, \eqref{eq:opt_u}--\eqref{eq:opt1} are satisfied by Lemma \ref{lemma:optimality}. 
\end{proof}

\begin{remark}
\label{rem:1}
For sufficiently uniformly large $K^P$, and sufficiently small $\gamma$, the condition \eqref{eq:stability_cond_distributed_1} is fulfilled. However, stability is independent of $K^V$. 
\end{remark}

\begin{corollary}
\label{cor:1}
A sufficient condition for when \eqref{eq:stability_cond_distributed_2} is fulfilled, is that $\mathcal{L}_c=\mathcal{L}_R$,  i.e., the topology of the communication network is identical to the topology of the power transmission lines and the edge weights of the graphs are identical.
\end{corollary}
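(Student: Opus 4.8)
The plan is to substitute the hypothesis $\mathcal{L}_c=\mathcal{L}_R$ directly into the matrix appearing inside $\lambda_{\min}(\cdot)$ in \eqref{eq:stability_cond_distributed_2} and thereby reduce the claim to a single positive-semidefiniteness statement. Under the hypothesis the two summands $\mathcal{L}_c(K^P)^{-1}\mathcal{L}_R$ and $\mathcal{L}_R(K^P)^{-1}\mathcal{L}_c$ become identical, so the combination collapses to $2\mathcal{L}_R(K^P)^{-1}\mathcal{L}_R$. It then suffices to show that $\mathcal{L}_R(K^P)^{-1}\mathcal{L}_R \ge 0$, since scaling by the positive constant $2$ does not affect the sign of the smallest eigenvalue.

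The key observation is that $(K^P)^{-1}=\diag([1/K^P_1,\dots,1/K^P_n])$ has strictly positive diagonal entries (recall $K^P_i>0$), hence is symmetric positive definite, and that $\mathcal{L}_R$ is symmetric. I would then evaluate the quadratic form: for any $x\in\mathbb{R}^n$,
\begin{align*}
x^T\mathcal{L}_R(K^P)^{-1}\mathcal{L}_R x = (\mathcal{L}_R x)^T (K^P)^{-1}(\mathcal{L}_R x) \ge 0,
\end{align*}
where nonnegativity follows because $(K^P)^{-1}>0$. This is just the standard congruence argument: writing $\mathcal{L}_R(K^P)^{-1}\mathcal{L}_R = \mathcal{L}_R^T(K^P)^{-1}\mathcal{L}_R$ exhibits it as a congruence transform of a positive definite matrix, which is therefore positive semidefinite.

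Finally, since $2\mathcal{L}_R(K^P)^{-1}\mathcal{L}_R$ is symmetric (both $\mathcal{L}_R$ and $(K^P)^{-1}$ are symmetric), its eigenvalues are real, and by the quadratic-form bound above they are all nonnegative; in particular $\lambda_{\min}\ge 0$, which is exactly \eqref{eq:stability_cond_distributed_2}. There is essentially no obstacle here: the only point requiring a moment's care is verifying that the symmetrized combination in \eqref{eq:stability_cond_distributed_2} genuinely reduces to $2\mathcal{L}_R(K^P)^{-1}\mathcal{L}_R$ under the hypothesis, so that the congruence argument applies. I would also note that the argument uses nothing about $\mathcal{L}_R$ beyond its symmetry, so the conclusion is actually robust to the precise graph structure.
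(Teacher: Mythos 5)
Your proof is correct and is essentially the argument the paper leaves implicit (the corollary is stated without proof): with $\mathcal{L}_c=\mathcal{L}_R$ the symmetrized matrix in \eqref{eq:stability_cond_distributed_2} collapses to $2\mathcal{L}_R(K^P)^{-1}\mathcal{L}_R=2\mathcal{L}_R^T(K^P)^{-1}\mathcal{L}_R$, a congruence of the positive definite diagonal matrix $(K^P)^{-1}$, hence positive semidefinite. Your closing observation that only the symmetry of $\mathcal{L}_R$ (not its Laplacian structure) is used is accurate and worth keeping.
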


\subsection{Distributed averaging controller II}
\label{sec:dist_control_2}
While the controller \eqref{eq:distributed_voltage_control} is clearly distributed, it has poor redundancy due to a specific HVDC bus dedicated for voltage measurement. Should the dedicated bus fail, the voltage of bus $1$ will not converge to the reference voltage asymptotically. To improve the redundancy of \eqref{eq:distributed_voltage_control}, we propose the following controller:
\begin{align}
u_i = \; & K_i^P(\hat{V}_i -V_i) \nonumber \\
\dot{\hat{V}}_i = \; & k^V \sum_{i\in\mathcal{V}} (V^{\text{nom}}_i - V_i) \nonumber \\
&-\gamma \sum_{j\in \mathcal{N}_i} c_{ij} \left( (\hat{V}_i -V_i){-}(\hat{V}_j -V_j) \right),\tag{II}
\label{eq:distributed_voltage_control_complete}
\end{align}
where $\gamma>0$ and $k^V>0$ are constants.
This controller can as \eqref{eq:distributed_voltage_control} be interpreted as a fast proportional control loop (consisting of the first line), and a slower integral control loop (consisting of the second and third lines). In contrast to \eqref{eq:distributed_voltage_control} however, every bus implementing \eqref{eq:distributed_voltage_control_complete} requires voltage measurements from all buses of the MTDC system. Thus, controller~\eqref{eq:distributed_voltage_control_complete} requires a complete communication graph. As long as the internal controller dynamics of $\hat{V}$ are sufficiently slow (e.g., by choosing $k^V$ sufficiently small), this is a reasonable assumption provided that a connected communication network exists.  In vector-form, \eqref{eq:distributed_voltage_control_complete} can be written as
\begin{align}
\label{eq:distributed_voltage_control_complete_vector}
\begin{aligned}
u &= K^P (\hat{V} -V) \\
\dot{\hat{V}} &= k^V 1_{n\times n} (V^{\text{nom}} -V) -\gamma \mathcal{L}_c (\hat{V}-V) ,
\end{aligned}
\end{align}
where $K^P$ is defined as before, $V^{\text{nom}} = [V^{\text{nom}}_1, \dots, V^{\text{nom}}_n]^T$ and $\mathcal{L}_c$ is the weighted Laplacian matrix of the graph representing the communication topology, denoted $\mathcal{G}_c$, whose edge-weights are given by $c_{ij}$, and which is assumed to be connected.
 The following theorem is analogous to Theorem~\ref{th:distributed_voltage_control}, and gives sufficient conditions for which controller parameters result in a stable equilibrium of the closed-loop system.

\begin{theorem}
\label{th:distributed_voltage_control_complete}
Consider an MTDC network described by \eqref{eq:hvdc_scalar}, where the control input $u_i$ is given by \eqref{eq:distributed_voltage_control_complete} and the injected currents $I^{\text{inj}}$ are constant.
The equilibrium of the closed-loop system is stable if \eqref{eq:stability_cond_distributed_1} and \eqref{eq:stability_cond_distributed_2} are satisfied. 
If furthermore $K^P=F^{-1}$, then $\lim_{t\rightarrow  \infty } u(t) = u^*$, and if $G=I_n$, where $G=\diag([g_1, \dots, g_n])$, \eqref{eq:cost_voltage} is minimized.  
This implies that Objective \ref{obj:1} is satisfied.
\end{theorem}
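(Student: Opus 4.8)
The plan is to leverage the structural identity between Controller~\eqref{eq:distributed_voltage_control_complete} and Controller~\eqref{eq:distributed_voltage_control}. Writing the closed-loop dynamics of \eqref{eq:hvdc_vector} under \eqref{eq:distributed_voltage_control_complete_vector} in block form yields
\begin{align*}
\begin{bmatrix} \dot{\hat V} \\ \dot V \end{bmatrix}
= \underbrace{\begin{bmatrix} -\gamma\mathcal{L}_c & \gamma\mathcal{L}_c - k^V 1_{n\times n} \\ EK^P & -E(\mathcal{L}_R + K^P) \end{bmatrix}}_{\triangleq A}
\begin{bmatrix} \hat V \\ V \end{bmatrix}
+ \begin{bmatrix} k^V 1_{n\times n} V^{\text{nom}} \\ C I^{\text{inj}} \end{bmatrix},
\end{align*}
which is exactly the matrix $A$ of \eqref{eq:closed_loop_distributed_vector} with the single-bus integrator gain $K^V=\diag([K^V_1,0,\dots,0])$ replaced by the averaging term $k^V 1_{n\times n}$. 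Since $K^V$ entered the characteristic-polynomial computation of Theorem~\ref{th:distributed_voltage_control} only through the constant coefficient of the scalar quadratic $x^T Q(s) x = 0$, the entire Schur-complement reduction transfers verbatim.

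First I would form the Routh--Hurwitz coefficients $a_0, a_1, a_2$ of $x^T Q(s) x = 0$. The coefficients $a_2 = x^T (K^P)^{-1} C x$ and $a_1 = x^T\big( (K^P)^{-1}\mathcal{L}_R + I_n + \gamma \mathcal{L}_c (K^P)^{-1} C \big) x$ are literally unchanged from Theorem~\ref{th:distributed_voltage_control}, so conditions \eqref{eq:stability_cond_distributed_1}--\eqref{eq:stability_cond_distributed_2} guarantee $a_2 > 0$, $a_1 > 0$, and positive semidefiniteness of the $\mathcal{L}_c (K^P)^{-1}\mathcal{L}_R$ part of $a_0$ exactly as before. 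The only altered quantity is the additive contribution to $a_0$, where $x^T K^V x$ becomes $x^T k^V 1_{n\times n} x = k^V (1_n^T x)^2$.

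The main point to verify --- and essentially the only nontrivial step --- is therefore that $a_0 > 0$ still holds. By connectedness of both graphs, $x^T \mathcal{L}_c (K^P)^{-1}\mathcal{L}_R x = 0$ forces $x = \tfrac{1}{\sqrt n} 1_n$; for this vector $k^V (1_n^T x)^2 = k^V n > 0$, so $a_0 > 0$ and $A$ is Hurwitz. This is precisely where the complete communication graph is used: the rank-one averaging matrix is positive on the consensus direction $1_n$, just as the single integrator $K^V$ was, so the same argument closes. I expect this to be the one place demanding genuine thought, since it is the sole structural difference from the previous proof.

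Finally, for the optimality claim I would examine the equilibrium of $A$. Premultiplying the stationary $\dot{\hat V}$ equation by $1_n^T$ and using $1_n^T \mathcal{L}_c = 0$ gives $k^V n \sum_{i=1}^n (V^{\text{nom}}_i - V_i) = 0$, i.e.\ $\sum_{i=1}^n (V_i - V^{\text{nom}}_i) = 0$; with $G = I_n$ this is exactly the condition of Lemma~\ref{cor:optimality_voltage}, so \eqref{eq:cost_voltage} is minimized. The same identity makes $1_{n\times n}(V^{\text{nom}} - V) = 0$, collapsing the stationary equation to $\mathcal{L}_c(\hat V - V) = 0$, whence $\hat V - V = k 1_n$ by connectedness. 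Then $u = K^P(\hat V - V) = k K^P 1_n$, and setting $K^P = F^{-1}$ puts $u$ in the form $\mu F^{-1} 1_n$, so Lemma~\ref{lemma:optimality} gives $u = u^*$ and Objective~\ref{obj:1} is satisfied.
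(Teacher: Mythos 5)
Your proposal is correct and follows essentially the same route as the paper: the paper's proof simply declares the argument ``analogous to Theorem~\ref{th:distributed_voltage_control},'' observing that $x^T 1_{n\times n}x = (1_n^Tx)^2 \ge 0$ so that only condition \eqref{eq:stability_cond_distributed_2} is needed for $a_0>0$, and that premultiplying the equilibrium by $1_n^T$ yields $1_n^T(V^{\text{nom}}-V)=0$, invoking Lemma~\ref{cor:optimality_voltage} and then the same consensus argument for $u=u^*$. You have merely filled in the details the paper omits, including the key check that the rank-one averaging term is strictly positive on the consensus direction $x=\tfrac{1}{\sqrt n}1_n$, which matches the paper's intent exactly.
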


\begin{proof}
The proof is analogous to the proof of Theorem~\ref{th:distributed_voltage_control}. Since $x^T1_{n\times n}x = (1_n^Tx)^T(1_n^Tx)\ge 0$, $1_{n\times n}\ge 0$, implying that the term $a_0$ is positive if
\begin{align*}
\frac 12 \lambda_{\min} \left(  \mathcal{L}_c (K^P)^{-1} \mathcal{L}_R + \mathcal{L}_R (K^P)^{-1} \mathcal{L}_c \right) &\ge 0.
\end{align*}
Thus the matrix $A$ is Hurwitz whenever \eqref{eq:stability_cond_distributed_1} and \eqref{eq:stability_cond_distributed_2} are satisfied. 
The equilibrium of the closed-loop system implies that $1_n^T(V^\text{nom} - V) = 0$. Thus, by Lemma~\ref{cor:optimality_voltage}, Equation~\eqref{eq:cost_voltage} is minimized. 
The remainder of the proof is identical to the proof of Theorem~\ref{th:distributed_voltage_control}, and is omitted. 
\end{proof}
\begin{remark}
\label{rem:controller_II}
For sufficiently uniformly large $K^P$, and sufficiently small $\gamma$, the condition \eqref{eq:stability_cond_distributed_2} is fulfilled. However, stability is independent of $K^V$. 
\end{remark}

\subsection{Distributed averaging controller III}
\label{sec:dist_control_3}
While the assumption that the voltage measurements can be communicated instantaneously through the whole MTDC network is reasonable for small networks or slow internal controller dynamics, the assumption might be unreasonable for larger networks. To overcome this potential issue, a novel controller is proposed. 
 The proposed controller takes inspiration from the control algorithms given in \cite{andreasson2013distributed, Andreasson2014_IFAC, simpson2012synchronization}, and is given by
\begin{align}
u_i &= -K^P_i(V_i - \hat{V}_i - \bar{V}_i ) \nonumber \\
\dot{\hat{V}}_i &= -\gamma \sum_{j\in \mathcal{N}_i} c_{ij} \left( (\hat{V}_i + \bar{V}_i -V_i){-}(\hat{V}_j + \bar{V}_j -V_j) \right) \nonumber \\
\dot{\bar{V}}_i &= - K_i^V (V_i - V_i^{\text{nom}}) -\delta \sum_{j\in \mathcal{N}_i} c_{ij} (\bar{V}_i - \bar{V}_j).\tag{III}
\label{eq:distributed_voltage_control_third}
\end{align}
The first line of the controller \eqref{eq:distributed_voltage_control_third} can be interpreted as a proportional controller, whose reference value is controlled by the remaining two lines. The second line ensures that the weighted current injections converge to the identical optimal value through a consensus-filter. The third line is a distributed secondary voltage controller, where each bus measures the voltage and updates the reference value through a consensus-filter. 
 In vector form, \eqref{eq:distributed_voltage_control_third} can be written as
\begin{align}
u &= -K^P (V - \hat{V} - \bar{V} ) \nonumber \\
\dot{\hat{V}} &= -\gamma \mathcal{L}_c (\hat{V} + \bar{V} -V) \nonumber \\
\dot{\bar{V}} &= -K^V(V - V^{\text{nom}}) - \delta \mathcal{L}_c \bar{V},\label{eq:distributed_voltage_control_third_vector}
\end{align}
where $K^P=\diag([K^P_1, \dots , K^P_n])$, $K^V=\diag([K^V_1, \dots, K^V_n])$, $V^{\text{nom}}=[V^{\text{nom}}_1, \dots , V^{\text{nom}}_n]^T$ and $\mathcal{L}_c$ is the weighted Laplacian matrix of the graph representing the communication topology, denoted $\mathcal{G}_c$, whose edge-weights are given by $c_{ij}$, and which is assumed to be connected.
Substituting the controller \eqref{eq:distributed_voltage_control_third_vector} in the system dynamics \eqref{eq:hvdc_vector}, yields
\begin{IEEEeqnarray}{rCl}
\begin{bmatrix}
\dot{\bar{V}} \\ \dot{\hat{V}} \\ \dot{V}
\end{bmatrix}
&=&
\underbrace{
\begin{bmatrix}
-\delta \mathcal{L}_c & 0_{n\times n} & -K^V \\
-\gamma \mathcal{L}_c & -\gamma \mathcal{L}_c & \gamma \mathcal{L}_c \\
EK^P & EK^P & -E(\mathcal{L}_R + K^P)
\end{bmatrix}}_{\triangleq A}
\begin{bmatrix}
{\bar{V}} \\ {\hat{V}} \\ {V}
\end{bmatrix} \IEEEnonumber 
\\
&&+ \underbrace{\begin{bmatrix}
K^VV^{\text{nom}} \\ 0_{n} \\ EI^{\text{inj}}
\end{bmatrix}}_{\triangleq b}.
\label{eq:cl_dynamics_vector_third}
\end{IEEEeqnarray}
 The following theorem characterizes when the controller \eqref{eq:distributed_voltage_control} stabilizes the equilibrium of \eqref{eq:hvdc_scalar}, and shows that it has some desirable properties. 

\begin{theorem}
\label{th:distributed_voltage_control_third}
Consider an MTDC network described by \eqref{eq:hvdc_scalar}, where the control input $u_i$ is given by \eqref{eq:distributed_voltage_control_third} and the injected currents $I^{\text{inj}}$ are constant. If all eigenvalues of $A$, except the one eigenvalue which always equals $0$, lie in $\mathbb{C}^-$, $K^P=F^{-1}$ and $K^V=G$, where $G=\diag([g_1, \dots, g_n])$, then Objective \ref{obj:1} is satisfied given any non-negative constants $g_i, \; i=1,\dots, n$. 
\end{theorem}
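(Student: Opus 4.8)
The plan is to take the stability hypothesis as given and reduce the claim to a steady-state computation, paired with a convergence argument that accommodates the single marginal eigenvalue of $A$. Concretely, I would (i) show that the trajectory of \eqref{eq:cl_dynamics_vector_third} converges to \emph{some} equilibrium despite $A$ having an eigenvalue at the origin, and (ii) verify that every such equilibrium meets the two optimality characterizations already available, namely Lemma~\ref{lemma:optimality} for the current injections and Lemma~\ref{cor:optimality_voltage} for the voltages. The equilibrium bookkeeping mirrors the proof of Theorem~\ref{th:distributed_voltage_control}, the essential difference being that here stability is assumed rather than derived from a Routh--Hurwitz condition.

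For the convergence step I would first exhibit the left null vector of $A$. A direct check shows $w^T A = 0$ for $w = [0_n^T, 1_n^T, 0_n^T]^T$, using only $1_n^T \mathcal{L}_c = 0$. Since $w^T b = 1_n^T 0_n = 0$ with $b$ as in \eqref{eq:cl_dynamics_vector_third}, the forcing term lies in the range of $A$, so an equilibrium exists; equivalently, $1_n^T \hat{V}$ is conserved along the flow. Writing the solution as a particular equilibrium plus the homogeneous part $e^{At}y_0$, the hypothesis that the zero eigenvalue is simple and all remaining eigenvalues lie in $\mathbb{C}^-$ makes the stable modes decay while the null-space component stays fixed, so the state converges to an equilibrium. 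I expect this marginal-eigenvalue bookkeeping to be the main obstacle: one must argue that the zero eigenvalue is semisimple (no Jordan block, else the solution would grow linearly) so that genuine convergence, rather than drift, occurs.

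Given convergence, the equilibrium analysis is short. Setting the left-hand side of \eqref{eq:cl_dynamics_vector_third} to zero, the middle block gives $\mathcal{L}_c(\hat{V} + \bar{V} - V) = 0$, and connectedness of $\mathcal{G}_c$ forces $\hat{V} + \bar{V} - V = k 1_n$ for a scalar $k$. Multiplying the bottom block by $C = E^{-1}$ and substituting this relation yields $\mathcal{L}_R V = I^{\text{inj}} + k K^P 1_n$, while the controller equation \eqref{eq:distributed_voltage_control_third_vector} gives $u = K^P(\hat{V} + \bar{V} - V) = k K^P 1_n$. With $K^P = F^{-1}$ this is exactly the form $u = \mu F^{-1} 1_n$ together with $\mathcal{L}_R V = I^{\text{inj}} + \mu F^{-1} 1_n$, so Lemma~\ref{lemma:optimality} identifies $k = \mu$ and certifies that \eqref{eq:opt_u}--\eqref{eq:opt1} hold.

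It remains to handle the voltage cost. Premultiplying the top block of \eqref{eq:cl_dynamics_vector_third} at equilibrium by $1_n^T$ and using $1_n^T \mathcal{L}_c = 0$ eliminates $\bar{V}$ and leaves $1_n^T K^V (V - V^{\text{nom}}) = 0$; with $K^V = G$ this reads $\sum_{i=1}^n g_i(V_i - V_i^{\text{nom}}) = 0$, which is precisely the condition of Lemma~\ref{cor:optimality_voltage} minimizing \eqref{eq:cost_voltage}. Combining the two optimality statements shows Objective~\ref{obj:1} holds for any non-negative $g_i$. The fully degenerate case $g_i \equiv 0$ is covered trivially, since then \eqref{eq:cost_voltage} vanishes identically and only the current-optimality part is required.
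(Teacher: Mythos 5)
Your proof is correct and follows the same overall strategy as the paper's: pass to the stationary equations of \eqref{eq:cl_dynamics_vector_third}, extract $\mathcal{L}_c(\hat V + \bar V - V)=0_n$ from the middle block so that $u = kK^P 1_n$ and Lemma~\ref{lemma:optimality} applies with $K^P=F^{-1}$, and premultiply the top block by $1_n^T$ to obtain $\sum_i g_i(V_i - V_i^{\text{nom}})=0$ and invoke Lemma~\ref{cor:optimality_voltage}. Where you genuinely depart from the paper is the convergence step. The paper identifies the right null eigenvector $v_1 = \tfrac{1}{\sqrt{2n}}[1_n^T,-1_n^T,0_n^T]^T$ and argues that since $b$ is not parallel to $v_1$ the limit exists; you instead exhibit the left null vector $w=[0_n^T,1_n^T,0_n^T]^T$, check $w^Tb=0$, and conclude that $b$ lies in the range of $A$, so an equilibrium exists, $1_n^T\hat V$ is conserved, and the remaining modes decay. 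Yours is the technically correct solvability condition: what prevents linear drift along the marginal mode is orthogonality of $b$ to the \emph{left} eigenvector, not non-parallelism to the right one, so your variant tightens a step the paper states loosely. Your concern about a possible Jordan block at the origin is dissolved by the hypothesis itself --- the theorem assumes the zero eigenvalue is the only eigenvalue not in $\mathbb{C}^-$, hence it is simple and automatically semisimple --- so no extra argument is needed there.
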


\begin{proof}
It is easily shown that  $A$ as defined in \eqref{eq:cl_dynamics_vector_third}, has one eigenvalue equal to $0$. 
The right-eigenvector of $A$ corresponding to the zero eigenvalue is $v_1={1}/{\sqrt{2n}}[1_{n}^T, -1_{n}^T, 0_{n}^T]^T$. Since $b$ as defined in \eqref{eq:cl_dynamics_vector_third}, is not parallel to $v_1$, $\lim_{t\rightarrow \infty} [{\bar{V}}(t), {\hat{V}}(t), {V}(t)]$ exists and is finite, by the assumption that all other eigenvalues lie in $\mathbb{C}^-$. Hence, we consider any stationary solution of \eqref{eq:cl_dynamics_vector_third}
\begin{IEEEeqnarray}{rCl}
{
\begin{bmatrix}
\delta \mathcal{L}_c & 0_{n\times n} & K^V \\
\gamma \mathcal{L}_c & \gamma \mathcal{L}_c & -\gamma \mathcal{L}_c \\
-K^P & -K^P & (\mathcal{L}_R + K^P)
\end{bmatrix}
\begin{bmatrix}
{\bar{V}} \\ {\hat{V}} \\ {V}
\end{bmatrix}}
{=} {\begin{bmatrix}
K^VV^{\text{nom}} \\ 0_{n} \\ I^{\text{inj}}
\end{bmatrix}}.
\label{eq:cl_dynamics_vector_equilibrium_third}
\end{IEEEeqnarray}
Premultiplying \eqref{eq:cl_dynamics_vector_equilibrium_third} with $[1_{n}^T, 0_{n}^T, 0_{n}^T]$ yields
\begin{align*}
1_{n}^TK^V ( V^{\text{nom}}-V) = \sum_{i=1}^n K_i^V ( V_i^{\text{nom}} - V_i^{\text{nom}}) =0,
\end{align*}
which by Lemma~\ref{cor:optimality_voltage} implies that Equation~\eqref{eq:cost_voltage} is minimized. 
The $n+1$-th to $2n$-th lines of \eqref{eq:cl_dynamics_vector_equilibrium_third} imply
$\mathcal{L}_c(\bar{V}+\hat{V}-V) = 0_{n}  \Rightarrow  (\bar{V}+\hat{V}-V) = k_1 1_{n}   
\Rightarrow 
u = K^P (\bar{V}+\hat{V}-V) =  k_1 K^P 1_{n}
$
By Lemma \ref{lemma:optimality}, \eqref{eq:opt_u}--\eqref{eq:opt1} are satisfied. 
\end{proof}
Note that Controller~\eqref{eq:distributed_voltage_control_third} is the only controller among the presented controllers which minimizes Equation~\eqref{eq:cost_voltage}, for any a priori given constants $g_i, \; i=1, \dots, n$. Controllers~\eqref{eq:distributed_voltage_control} and \eqref{eq:distributed_voltage_control_complete} minimize Equation~\eqref{eq:cost_voltage}, but for specific values of $g_i, \; i=1, \dots, n$. 
While Theorem~\ref{th:distributed_voltage_control_third} establishes an exact condition when the distributed controller \eqref{eq:distributed_voltage_control_third} stabilizes the equilibrium of the MTDC system \eqref{eq:hvdc_scalar}, it does not give any insight in how to choose the controller parameters to stabilize the equilibrium. The following theorem gives a sufficient stability condition for a special case.
\begin{theorem}
\label{th:stability_A}
Assume that $\mathcal{L}_c = \mathcal{L}_R$, i.e. that the topology of the communication network is identical to the topology of the MTDC system. Assume furthermore that $K^P = k^P I_n$, i.e. the controller gains are equal. 
 Then all eigenvalues of $A$ except the zero eigenvalue lie in $\mathbb{C}^-$ if
 \begin{IEEEeqnarray}{Cl}
 &\frac{\gamma+\delta}{2k^P} \lambda_{\min}\left( \mathcal{L}_RC +C\mathcal{L}_R \right) + 1 >0 \label{eq:stability_A_sufficient_1} \\
  &\frac{\gamma\delta}{2k^P} \lambda_{\min}\left( \mathcal{L}^2_RC + C\mathcal{L}_R^2 \right) + \min_i K^V_i >0 \label{eq:stability_A_sufficient_2} \\ 
  &\lambda_{\max}\left(\mathcal{L}_R^3\right) \frac{\gamma\delta}{k^{P^2}} \IEEEnonumber \\
  \le & \left( \frac{\gamma+\delta}{2k^P} \lambda_{\min}\left( \mathcal{L}_RC  +C\mathcal{L}_R \right) + 1 \right) \IEEEnonumber \\
   &\times \left( \frac{\gamma\delta}{2k^P} \lambda_{\min}\left( \mathcal{L}^2_RC + C\mathcal{L}_R^2 \right) + \min_i K^V_i \right)
    \label{eq:stability_A_sufficient_3}
 \end{IEEEeqnarray}
\end{theorem}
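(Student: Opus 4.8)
The plan is to mimic the characteristic-polynomial argument used in the proof of Theorem~\ref{th:distributed_voltage_control}. First I would substitute $\mathcal{L}_c=\mathcal{L}_R$ and $K^P=k^PI_n$ into the matrix $A$ of \eqref{eq:cl_dynamics_vector_third} and form $\det(sI_{3n}-A)$. Scaling the third block row by $C$ (which multiplies the determinant by the nonzero constant $\det C$) removes the elastances $E$, after which I would eliminate the first two block rows by a Schur complement with respect to the leading $2n\times 2n$ block. The decisive simplification afforded by the hypotheses is that, with $\mathcal{L}_c=\mathcal{L}_R$ and $K^P=k^PI_n$, the factors $sI_n+\gamma\mathcal{L}_R$ and $sI_n+\delta\mathcal{L}_R$ appearing in that block are polynomials in $\mathcal{L}_R$ and therefore commute, so the Schur complement collapses to the single $n\times n$ cubic matrix polynomial
\begin{align*}
Q(s)={}&s^3C+s^2\big[(\gamma+\delta)\mathcal{L}_RC+\mathcal{L}_R+k^PI_n\big]\\
&+s\big[\gamma\delta\mathcal{L}_R^2C+(\gamma+\delta)\mathcal{L}_R^2+k^P\delta\mathcal{L}_R+k^PK^V\big]\\
&+\gamma\delta\mathcal{L}_R^3.
\end{align*}
Its roots coincide with the eigenvalues of $A$; indeed $\det Q(s)=\det C\,\det(sI_{3n}-A)$ as a polynomial identity, valid first where the leading block is invertible and then for all $s$ by continuity. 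As a check, $Q(0)=\gamma\delta\mathcal{L}_R^3$ is singular, reproducing the single guaranteed zero eigenvalue.

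Next, exactly as in the earlier proofs, $\det Q(s)=0$ has a solution only if $x^TQ(s)x=0$ for some $x$ with $\norm{x}_2=1$, which reduces the spectral condition to the scalar cubic $a_3s^3+a_2s^2+a_1s+a_0=0$, where $a_3=x^TCx$ and $a_2,a_1,a_0$ are the quadratic forms $x^T(\cdot)x$ of the $s^2$-, $s^1$- and $s^0$-coefficient matrices of $Q(s)$, respectively. Since $x$ is real, each form sees only the symmetric part of its matrix, which is precisely why the hypotheses are stated through $\mathcal{L}_RC+C\mathcal{L}_R$ and $\mathcal{L}_R^2C+C\mathcal{L}_R^2$. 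The Routh--Hurwitz criterion for a cubic with $a_3>0$ requires $a_2>0$, $a_1>0$, $a_0>0$ and $a_2a_1>a_3a_0$. I would split into the consensus direction $x=\tfrac{1}{\sqrt n}1_{n}$, where $\mathcal{L}_Rx=0$ forces $a_0=0$ and the cubic degenerates to $s(a_3s^2+a_2s+a_1)$, contributing only the admissible root $s=0$ and two further roots governed by $a_2,a_1>0$; and all remaining $x$, for which $a_0>0$ and the full cross-condition is needed.

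The three stated inequalities then follow by bounding the quadratic forms uniformly over $\norm{x}_2=1$. Using $x^T\mathcal{L}_Rx\ge0$ gives $a_2\ge\tfrac{\gamma+\delta}{2}\lambda_{\min}(\mathcal{L}_RC+C\mathcal{L}_R)+k^P$, which is \eqref{eq:stability_A_sufficient_1} after division by $k^P$; similarly $a_1\ge\tfrac{\gamma\delta}{2}\lambda_{\min}(\mathcal{L}_R^2C+C\mathcal{L}_R^2)+k^P\min_iK^V_i$ gives \eqref{eq:stability_A_sufficient_2}. Finally, \eqref{eq:stability_A_sufficient_3} is the cross-condition $a_2a_1>a_3a_0$ written through uniform bounds: the constant-term contribution is controlled by $a_0\le\gamma\delta\lambda_{\max}(\mathcal{L}_R^3)$, while the right-hand side is the product of the lower bounds already obtained for $a_2$ and $a_1$.

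The main obstacle is the non-commutativity of $C$ and $\mathcal{L}_R$: it blocks simultaneous diagonalization, so the modes do not decouple and one is pushed to the symmetrized, worst-case $\lambda_{\min}/\lambda_{\max}$ estimates rather than an exact modal computation. Establishing the cross-condition \eqref{eq:stability_A_sufficient_3} is therefore the delicate step, as is verifying that the passage to a single real test vector $x$, with the leading coefficient $a_3=x^TCx>0$ kept strictly positive, does not discard any root with nonnegative real part.
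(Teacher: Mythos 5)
Your proposal follows essentially the same route as the paper's proof: reduce $\det(sI_{3n}-A)$ to an $n\times n$ cubic matrix polynomial $Q(s)$ (your $Q(s)$ is exactly $k^P$ times the paper's, a harmless normalization, and your explicit Schur-complement computation correctly fills in the "tedious matrix manipulations" the paper omits), pass to the scalar cubic $x^TQ(s)x=0$, split off the consensus direction where $a_0=0$ yields the zero eigenvalue, and apply Routh--Hurwitz with the symmetrized $\lambda_{\min}/\lambda_{\max}$ bounds to obtain \eqref{eq:stability_A_sufficient_1}--\eqref{eq:stability_A_sufficient_3}. The only loose end — that the cross-condition really needs a bound on $a_0a_3$ including the factor $x^TCx\le\max_iC_i$, which \eqref{eq:stability_A_sufficient_3} silently drops — is present in the paper's own proof as well, so you have reproduced its argument faithfully.
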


\begin{proof}
Following similar steps as the proof of Theorem~\ref{th:distributed_voltage_control}, one obtains after some tedious matrix manipulations that \eqref{eq:cl_dynamics_vector_third} is stable if the following equation has solutions $s \in \mathbb{C}^-$:
\begin{IEEEeqnarray}{ll}
&0= x^TQ(s)x =  \underbrace{ \frac{\gamma\delta}{k^P} x^T \mathcal{L}_R^3 x}_{a_0} \IEEEnonumber \\
&+ s \underbrace{x^T\left[ \frac{\delta+\gamma}{k^P}\mathcal{L}_R^2 
+ \delta\mathcal{L}_R +\frac{\gamma\delta}{k^P}\mathcal{L}_R^2C +K^V \right]x}_{a_1} \IEEEnonumber \\
&+   s^2 \underbrace{x^T\left[\frac{1}{k^P}\mathcal{L}_R {+} I_n {+}\frac{\gamma+\delta}{k^P}\mathcal{L}_RC \right]x}_{a_2}
{+} s^3 \underbrace{\frac{1}{k^P} x^T C x}_{a_3}. \quad
\label{eq:x^TQx}
\end{IEEEeqnarray}
Clearly \eqref{eq:x^TQx} has one solution $s=0$ for $x=\frac{a}{\sqrt{n}}[1, \dots, 1]^T$, since this implies that $a_0=0$. The remaining solutions are stable if and only if the polynomial $a_1+ sa_2+s^2a_3=0$ is Hurwitz, which is equivalent to $a_i>0$ for $i=1,2,3$ by the Routh-Hurwitz stability criterion. For $x\ne \frac{a}{\sqrt{n}}[1, \dots, 1]^T$, we have that $a_0>0$, and thus $s=0$ cannot be a solution of \eqref{eq:x^TQx}. By the Routh-Hurwitz stability criterion, \eqref{eq:x^TQx} has stable solutions if and only if $a_i>0$ for $i=0,1,2,3$ and $a_0a_3<a_1a_2$. Since this condition implies that $a_i>0$ for $i=1,2,3$, there is no need to check this second condition explicitly. Clearly $a_3>0$ since $(K^P)^{-1}$ and $C$ are diagonal with positive elements. It is easily verified that $a_2>0$ if \eqref{eq:stability_A_sufficient_1} holds, since $\mathcal{L}_R\ge 0$. Similarly, $a_1>0$ if \eqref{eq:stability_A_sufficient_2} holds, since also $\mathcal{L}_R^2\ge0$ and $x^TK^Vx\ge \min_i K^V_i$. In order to assure that $a_0a_3<a_1a_2$, we need furthermore to upper bound $a_0a_3$. The following bound is easily verified
\begin{align*}
a_0a_3<\lambda_{\max}\left(\mathcal{L}_R^3\right) \frac{\gamma\delta}{k^{P^2}} \max_i C_i.
\end{align*} 
Using this, together with the lower bounds on $a_1$ and $a_2$, we obtain that \eqref{eq:stability_A_sufficient_3} is a sufficient condition for $a_0a_3<a_1a_2$. 
\end{proof}

\begin{remark}
For sufficiently small $\gamma$ and $\delta$, and sufficiently large  $k^P$ and $\min_iK^V_i$, the inequalities \eqref{eq:stability_A_sufficient_1}--\eqref{eq:stability_A_sufficient_3} hold, thus always enabling the choice of stabilizing controller gains. 
\end{remark}

\section{Simulations}
\label{sec:simulations}
Simulations of an MTDC system were conducted using MATLAB. The MTDC  was modelled by \eqref{eq:hvdc_scalar}, with $u_i$ given by the distributed controllers \eqref{eq:distributed_voltage_control}, \eqref{eq:distributed_voltage_control_complete} and \eqref{eq:distributed_voltage_control_third}, respectively. The topology of the MTDC system is assumed to be as illustrated in Figure~\ref{fig:graph}. The system parameter values are obtained from \cite{jovcic2003vsc}, where the inductances of the DC lines are neglected, and the capacitances of the DC lines are assumed to be located at the converters. The system parameter values are assumed to be identical for all converters, and are given in Table~\ref{table:system_parameters}. The controller parameters are also assumed to be uniform, i.e., $K^P_i= k_p, K^V_i=k^V$ for $i=1,2,3,4$, and their numerical values are given in Table \ref{table:controller_parameters}. Due to the communication of controller variables, a constant delay of $500$ ms is assumed. The delay only affects remote information, so that, e.g., the first line of the controllers \eqref{eq:distributed_voltage_control}, \eqref{eq:distributed_voltage_control_complete} and \eqref{eq:distributed_voltage_control_third} remain delay-free. 
\begin{table}
\caption{System parameter values used in the simulation.}
\label{table:system_parameters}
\centering
\begin{tabular}{l|l}
Parameter & Value [Unit] \\
\midrule
$C_i$ & $57 \mu$F \\
$R_{ij}$ & $3.7 \Omega$
\end{tabular}
\end{table}

\begin{table}
\caption{Controller parameter values used in the simulation.}
\label{table:controller_parameters}
\centering
\begin{tabular}{l|lll}
Parameter/Controller & I & II & III \\
\midrule
$k^P$ & 10 $\Omega^{-1}$ & 10 $\Omega^{-1}$ & 0.5 $\Omega^{-1}$ \\

$k^V$  & 10 & 5 & 2.5 \\

$\gamma$  & 20 & 15 & 3 \\

$\delta$  & - & - & 2 \\
\end{tabular}
\end{table}
The communication gains were set to $c_{ij} = R_{ij}^{-1} \;$S for all $(i,j)\in \mathcal{E}$ and for all controllers. 
%
The injected currents are assumed to be initially given by $I^{\text{inj}}=[300,200,-100,-400]^T$ A, and the system is allowed to converge to its equilibrium. Since the injected currents satisfy $I_i^{\text{inj}}=0$, $u_i=0$ for $i=1,2,3,4$ by Theorem~\ref{th:distributed_voltage_control}.
Then, at time $t=0$, the injected currents are changed to $I^{\text{inj}}=[  300, 200, -300, -400]^T$ A. The step responses of the voltages $V_i$ and the controlled injected currents $u_i$ are shown in Figure~\ref{fig:powersystems_sim_1_tau_0}. The conservative voltage bounds implied by Lemma \ref{lemma:voltage_differences}, are indicated by the two dashed lines. We note that the controlled injected currents $u_i$ converge to their optimal values, and that the voltages remain within the bounds. 

\setlength\fheight{2.1cm}
\setlength\fwidth{6.8cm}

\pgfplotsset{yticklabel style={text width=3em,align=right}}
\begin{figure*}
 \centering
 		\renewcommand{\thesubfigure}{I.a}
        \begin{subfigure}[b]{0.47\textwidth}
        \begin{flushleft}
        			\hspace{-0.25cm}
                \input{Simulations_leader/V_tau=0gamma=0005.tikz}
                \caption{Controller I, bus voltages} 
                \label{fig:gull}
        \end{flushleft}
        \end{subfigure} \quad 
        ~ 
          \renewcommand{\thesubfigure}{I.b}
        \begin{subfigure}[b]{0.47\textwidth}
        \begin{flushright}
                \input{Simulations_leader/u_tau=0gamma=0005.tikz}
                \caption{Controller I, controlled injected currents}
                \label{fig:tiger}    
        \end{flushright}
        \end{subfigure}
        ~ 
      	\renewcommand{\thesubfigure}{II.a}
        \begin{subfigure}[b]{0.47\textwidth}
        \begin{flushleft}
        			\hspace{-0.25cm}
                \input{Simulations_complete/V_tau=0gamma=0005.tikz}
                \caption{Controller II, bus voltages}
                \label{fig:gull}        
        \end{flushleft}
        \end{subfigure}\quad
        ~ 
          \renewcommand{\thesubfigure}{II.b}
        \begin{subfigure}[b]{0.47\textwidth}
        \begin{flushright}
                \input{Simulations_complete/u_tau=0gamma=0005.tikz}
                \caption{Controller II, controlled injected currents}
                \label{fig:tiger}    
        \end{flushright}
        \end{subfigure}
        
        \renewcommand{\thesubfigure}{III.a}
        \begin{subfigure}[b]{0.47\textwidth}
        \begin{flushleft}
        			\hspace{-0.25cm}
                \input{Simulations_leaderless/V_tau=0gamma=0005.tikz}
                \caption{Controller III, bus voltages}
                \label{fig:gull}        
        \end{flushleft}
        \end{subfigure}\quad
        ~ 
          \renewcommand{\thesubfigure}{III.b}
        \begin{subfigure}[b]{0.47\textwidth}
        \begin{flushright}
                \input{Simulations_leaderless/u_tau=0gamma=0005.tikz}
                \caption{Controller III, controlled injected currents}
                \label{fig:tiger}     
        \end{flushright}
        \end{subfigure}
        \caption{The figures show the voltages $V_i$ and the controlled injected currents $u_i$, respectively. The system model is given by \eqref{eq:hvdc_scalar}, and $u_i$ is given by the distributed controllers \eqref{eq:distributed_voltage_control}, \eqref{eq:distributed_voltage_control_complete} and \eqref{eq:distributed_voltage_control_third}, respectively. We note that all controllers demonstrate reasonable performance. Controller~\eqref{eq:distributed_voltage_control_third} has the advantage of being fully distributed, while Controller~\eqref{eq:distributed_voltage_control} requires a dedicated voltage measurement bus, and Controller~\eqref{eq:distributed_voltage_control_complete} requires a complete communication network. }
        \label{fig:powersystems_sim_1_tau_0}
\end{figure*}

\section{Discussion and Conclusions}
\label{sec:discussion}
In this paper we have studied VDM for MTDC systems, and highlighted some of its weaknesses. To overcome some of its disadvantages, three distributed controllers for MTDC systems were proposed. We showed that under certain conditions, there exist controller parameters such that the equilibria of the closed-loop systems are stabilized. 
In particular, a sufficient stability condition is that the graphs of the physical MTDC network and the communication network are identical, including their edge weights. 
We have shown that the proposed controllers are  able to control the voltage levels of the DC buses close towards the nominal voltages, while simultaneously minimizing a quadratic cost function of the current injections. The proposed controllers were tested on a four-bus MTDC network by simulation, demonstrating their effectiveness. 

This paper lays the foundation for distributed control strategies for hybrid AC and MTDC systems. Future work will in addition to the voltage dynamics of the MTDC system, also consider the dynamics of connected AC systems. Interconnecting multiple asynchronous AC systems also enables novel control applications, for example automatic sharing of primary and secondary frequency control reserves. Preliminary results on decentralized cooperative AC frequency control by an MTDC grid have been presented in \cite{andreasson2014mtdacac}.
The stability conditions in this work depend on both products of diagonal matrices and Laplacian matrices, and products of Laplacian matrices. Little is known about the positive definiteness of such matrix products, motivating further research. 

\bibliography{references}
\bibliographystyle{plain}
\end{document}